\newcommand{\GL}{\mathrm{GL}}
\newcommand{\Sp}{\mathrm{Sp}}
\newcommand{\mic}{{mic}}
\newcommand{\U}{\mathrm{U}}
\newcommand{\field}{\mathbb}
\newcommand{\liealgebra}{\mathfrak}
\newcommand{\C}{{\field C}}
\newcommand{\R}{{\field R}}
\newcommand{\Z}{{\field Z}}
\newcommand{\Q}{{\field Q}}
\renewcommand{\t}{\liealgebra t}
\newcommand{\lam}{\lambda}
\newcommand{\eps}{\epsilon}
\newcommand{\SO}{\mathrm{SO}}
\newcommand{\lra}{\longrightarrow}
\newcommand{\ra}{\rightarrow}
\newcommand{\per}{\textrm{per}}
\newcommand{\con}{\textrm{con}}
\newtheorem{prop}{Proposition}[section]
\newtheorem{cor}[prop]{Corollary}
\newtheorem{lemma}[prop]{Lemma}
\newtheorem{theorem}[prop]{Theorem}
\numberwithin{equation}{section}
\newtheorem{corollary}[prop]{Corollary}
\theoremstyle{definition}
\newtheorem{remark}[prop]{Remark}
\newtheorem{example}[prop]{Example}
\newtheorem{definition}[prop]{Definition}
\newcommand{\IB}{A} 
\newcommand{\frb}{\mathfrak{b}}
\newcommand{\frg}{\mathfrak{g}}
\newcommand{\frh}{\mathfrak{h}}
\newcommand{\frk}{\mathfrak{k}}
\newcommand{\frl}{\mathfrak{l}}
\newcommand{\frn}{\mathfrak{n}}
\newcommand{\fro}{\mathfrak{o}}
\newcommand{\frp}{\mathfrak{p}}
\newcommand{\frq}{\mathfrak{q}}
\newcommand{\frs}{\mathfrak{s}}
\newcommand{\frt}{\mathfrak{t}}
\newcommand{\fru}{\mathfrak{u}}
\newcommand{\bbC}{\mathbb{C}}
\newcommand{\bbQ}{\mathbb{Q}}
\newcommand{\bbR}{\mathbb{R}}
\newcommand{\bbZ}{\mathbb{Z}}
\newcommand{\caC}{\mathcal{C}}
\newcommand{\caL}{\mathcal{L}}
\newcommand{\caO}{\mathcal{O}}
\newcommand{\caP}{\mathcal{P}}
\newcommand{\caV}{\mathcal{V}}
\newcommand{\caX}{\mathcal{X}}
\newcommand{\beq}{\begin{equation}}
\newcommand{\eeq}{\end{equation}}
\def \Ad  {\mathop{\hbox {Ad}}\nolimits}
\def\ad {\mathop{\hbox {ad}}\nolimits}
\renewcommand{\c}{\mathrm{con}}
\begin{document}
\title[]{relating real and p-adic Kazhdan-Lusztig polynomials}
\author{Leticia Barchini}
\address{Department of Mathematics, Oklahoma State University, Stillwater, OK 74078}
\email{leticia@math.okstate.edu}

\author{Peter E.~Trapa}
\address{Department of Mathematics, University of Utah, Salt Lake City, UT 84112-0090}
\email{peter.trapa@utah.edu}

\maketitle

\begin{abstract}
Fix an integral semisimple element $\lambda$ in the Lie algebra $\frg$ of a complex reductive algebraic group $G$.  Let $L$ denote the centralizer of $\lambda$ in $G$ and let $\frg(-1)$ denote the $-1$-eigenspace of $\ad(\lambda)$ in $\frg$.  Under a natural hypothesis (which is always satisfied for classical groups), we embed
 the closure of each $L$ orbit on $\frg(-1)$ into the closure of an orbit of a symmetric subgroup $K$ containing $L$ on a partial flag variety for $G$.
We use this to relate the local intersection homology of the latter orbit closures to the former orbit closures.  This, in turn,
relates multiplicity matrices for split real and $p$-adic groups. We also describe relationships between
``microlocal packets'' of representations of these groups.
\end{abstract}

\section{introduction}
The main result of this paper, Theorem \ref{t:main}, relates certain Kazhdan-Lusztig polynomials that arise in the representation theory of real and $p$-adic groups.  Since these polynomials encode the multiplicities of irreducible representations in standard representation, we thus relate these two kinds of multiplicities.  Under favorable circumstances (which
are always satisfied for $\GL(n)$ and $\Sp(2n)$,  for example), our results imply that the decomposition matrix for certain unipotent representations of a split $p$-adic group is a submatrix (in a variety of different ways) of the decomposition matrix for representations of a split real group.   

In more detail, suppose $\lambda$ is a semisimple element in the Lie algebra $\frg$ of a complex reductive algebraic group $G$.  Let $L$ denote the centralizer in $G$ of $\lambda$.  Then, for $c \in \bbC^\times$, $L$ acts with finitely many orbits on the $c$-eigenspace of $\ad(\lambda)$ \cite{Vi}. Thus one can consider local intersection homology Poincar\'e polynomials for the closures of $L$ orbits.  (An elementary argument reduces matters to the case of $\lambda$ integral and $c=-1$ and we will consider this case henceforth; see Remark \ref{r:integral}.)  Lusztig \cite{lu:blocks,lu:alg} established a finite
effective algorithm to compute these polynomials. Roughly speaking, their values at 1 give multiplicities of irreducible unipotent representations in standard representations of the split $p$-adic form of the Langlands dual of $G$ \cite{lu:cls2}.

On the other hand, let $K$ denote the identity component of the fixed points in $G$ of the automorphism $\theta$ obtained by conjugation by $\exp(i\pi\lambda)$.  Then $K$ acts with finitely
many orbits on the partial flag variety $\caP$ consisting of conjugates of the sum of the nonnegative eigenspaces of $\ad(\lambda)$.  Once again, one can consider local intersection homology Poincar\'e polynomials.  Vogan  \cite{v:ic3}
described a finite effective algorithm to compute them which has been implemented in the software package {\tt atlas}.
Evaluating these polynomials at 1 gives multiplicities of irreducibles in standard representation
of the identity component of the split real form of the Langlands dual of $G$ \cite{vogan:ic4,abv}.

The algorithms of \cite{lu:alg} and \cite{v:ic3} are completely different. Nonetheless, Theorem \ref{t:main} says that certain polynomials that they compute separately are the same.  Under a certain hypothesis (see \eqref{e:key}) we define a map $\eps$, a kind of truncated exponential map, from the $-1$-eigenspace $\frg(-1)$ of $\ad(\lambda)$ to $\caP$, and use it to define an injection of $L$ orbits on $\frg(-1)$ to $K$ orbits on $\caP$; see Definition \ref{d:eps}.
This gives rise to a restriction $\varphi$ of irreducible local $K$-equivariant local system on $\caP$ to irreducible $L$ equivariant local systems on $\frg(-1)$.  We prove in Theorem \ref{t:main} that
\begin{equation}
\label{e:intro}
P_{\varphi(\psi), \varphi(\gamma)} = P_{\psi,\gamma};
\end{equation}
here the polynomial on the left-hand side is relevant for $p$-adic group representations, and the polynomial on the right-hand side is relevant for real group representations.  
In terms of representation theory, $\varphi$ can be thought of as matching an irreducible unipotent representation of a split $p$-adic group with an 
irreducible Harish Chandra module for a split real group,
and \eqref{e:intro}  implies that their respective multiplicities in certain standard modules (also matched by $\varphi$) are the same.  

Equation \eqref{e:intro} generalizes the main geometric result for $\GL(n)$ of Ciubotaru-Trapa \cite{ct}; see Remark \ref{r:ct}.   In Remark \ref{r:O}, we also give analogous results equating the $p$-adic polynomials with Kazhdan-Lusztig polynomials arising from category $\caO$, generalizing results of Zelevinsky \cite{zel} for $\frg\frl(n)$ to all classical groups.

There are several important hypotheses to highlight.  As we indicated above, the existence of the map $\eps$ depends
on \eqref{e:key}.  As explained in Example \ref{ex:classical}, \eqref{e:key} always holds in the classical groups, but Example \ref{ex:f4}
shows that it cannot hold in certain exceptional cases.  Moreoever, when \eqref{e:key} holds, the definition of $\eps$ depends on a certain choice of ordering of the set $\mathscr{P}$ in Definition \ref{d:eps}.  Different choices lead to different maps $\eps$, and therefore to different maps $\varphi$ on local systems.  Nonetheless \eqref{e:intro} hold for all such choices.  In other words, depending on the choices made, we possibly identify the $p$-adic polynomial on the left-hand side of \eqref{e:intro} with different instances of the real polynomial on the right-hand side. 
The dependence on the choice of ordering is perhaps disappointing, since one might have hoped for a canonical relationship. On the other hand, in practice different choices  lead to different matchings that reveal interesting nontrivial coincidences among the polynomials in question.  

Note also that we have assumed $K$ to be the identity component of the fixed points of $\theta$.  This is perhaps unnatural from the point of view of representations of algebraic groups, and requires explanation.  If we had instead worked with the potentially disconnected $K' = G^\theta$, the orbits of $K'$ on $\caP$ can be reducible, and different irreducible components can contribute to the local intersection cohomology in ways that make the left-hand side of \eqref{e:intro} a {\em summand} of the right-hand side.   In any particular case, this is tractable to understand, but general statements in the presence of this kind of disconnectedness are somewhat cumbersome.  See Remark \ref{r:disconnected}.  

Finally, in addition to matching local intersection homology polynomials, Theorem \ref{t:main} shows that $\varphi$ also matches microlocal geometric information.  A number of interesting consequences for ABV micro-packets of representations are sketched in Section \ref{s:abv}. 

%

\section{matching of orbits}
\label{s:prelim}

Let $G$ be a complex connected reductive algebraic group with Lie algebra $\frg$.  
Fix a Borel subalgebra $\frb = \frt \oplus \frn$ and fix a semisimple element $\lambda \in \frt$ which is integral and weakly dominant in
the sense that the pairing of $\lambda$ with any coroot for $\t$ in $\frn$ is a non-negative integer.  (See Remark \ref{r:integral} for a discussion of the nonintegral case.) For each $i \in \bbZ$, write
\[
\frg(i) = \left \{ x \in \frg \; | \; [\lambda, x] =ix \right \},
\]
the $i$-eigenspace for $\ad(\lambda)$.  Set $\frl{}$ equal to $\frg(0)$, and let $L$ denote the centralizer in $G$ of $\frl$.  Then $L$ acts with finitely many
orbits on each $\frg(i)$ \cite{Vi}.

 Let
\[
\fru{} = \bigoplus_{i > 0} \frg(i) \qquad \text{and} \qquad \overline{\fru}{} = \bigoplus_{i < 0} \frg(i);
\]
and
\[
\frp{} = \frl{} \oplus \fru{}.
\]
Thus $\frp{}$ is a parabolic subalgebra containing $\frb$.  Write $\caP{}$ for the variety of conjugates of $\frp{}$.  Then $\caP \simeq G/P{}$ where
$P=LU$ is the centralizer in $G$ of $\frp{}$.

Let $y(\lam)$ denote $\exp(i\pi \lam)$.  Since $\lambda$ is integral, the square of $y(\lam)$ is central.  Conjugation by $y(\lam)$ therefore defines an involution $\theta$ of $G$.  Let $K$ denote the identify component of its fixed points.   Note that the Lie algebra of $K$ is
\[
\frk =\bigoplus_i \frg(2i)
\]
and $K$ contains $L$ by definition.

\begin{example}
\label{ex:K}
Let $G = \GL(n,\C)$ and let $\frt$ denote the diagonal Cartan subalgebra.  After a central shift, the integrality of $\lam$ implies that we may assume $\lam$ consists of integer entries.  Then $K\simeq \GL(p,\bbC) \times \GL(q,\bbC)$ where $p$ is the number of even entries of $\lambda$ and $q$ is the number of odd entries.  The symmetric pair $(G,K)$ corresponds to the real group $\U(p,q)$.  This is the setting of \cite{ct}.

Next let $G = \Sp(2n,\bbC)$ and let $\frt$ denote the diagonal Cartan subalgebra in the standard realization.  For $\lam$ to be integral, either all of its entires are integers, or else they are all integers shifted by $1/2$.  If $\lam$ consists of all half-integers, then $K \simeq \GL(n,\bbC)$, corresponding to the real group $\Sp(2n,\bbR)$.  If $\lam$ consists of all integers, then $K \simeq \Sp(p,\bbC) \times \Sp(q,\bbC)$ where $p$ is the number of even entries of $\lambda$ and $q$ is the number of odd entries.  This case corresponds to the real group $\Sp(p,q)$.\qed
\end{example}

\bigskip

Recall that our goal is to relate $L$ orbits on $\frg(-1)$ and $K$ orbits on $\caP$.  As mentioned in the introduction,
we do this using a kind of truncated exponential map.  In order to define the map, we need to introduce certain hypotheses which we now describe.

Let $\mathscr{P}$ denote a collection of parabolic subalgebras each of which properly contains $\frp{}$.  Choose an order on the elements of $\mathscr{P}$ and write 
\[
\mathscr{P} = \{ \frp_1, \dots, \frp_\ell \}.
\]
For each $i$, write the Levi decomposition as $\frp_i = \frl_i \oplus \fru_i$.  Since $\frp_i$ contains $\frp{}$, 
\[
\frl_i \cap \overline{\fru}{}
\]
is the nilradical of a parabolic subgroup of $\frl_i$.  We will be interested in imposing the following hypotheses on the collection $\mathscr{P}$: first,
that 
\begin{equation}
\label{e:key0}
\frl_i \cap \frg(-1) \neq \{0\}; 
\end{equation}
and second that
\begin{equation}
\label{e:key}
\frg(-1) = \bigoplus_{i = 1}^\ell \left [ \frl_i \cap \overline{\fru}{} \right ].
\end{equation}

\begin{definition}
\label{d:eps}
Fix an ordered collection $\mathscr P = \{\frp_1, \dots, \frp_\ell\}$ satisfying \eqref{e:key0} and \eqref{e:key}.  (Such a collection $\mathscr{P}$ always exists
if $G$ is classical, but need not always exist in the exceptional cases; see Examples \ref{ex:rho-check}--\ref{ex:g2} below.)
Define
\[
\epsilon \; : \; \frg(-1) \lra \caP
\]
by writing $x\in \frg(-1)$ as $x_1+\cdots + x_\ell$ according to \eqref{e:key} and
setting
\[
\epsilon(x) = \exp(x_1)\exp(x_2) \cdots \exp(x_\ell)\cdot \frp.
\]
Fix an orbit $\caO$ of $L$ on $\frg(-1)$.  Since
$\epsilon$ is $L$ equivariant and since $L \subset K$, $K \cdot \epsilon (\caO)$ consists
of a single $K$ orbit which we call $Q_\caO$.    The assignment
\begin{equation}
\label{e:QO}
\caO \mapsto Q_\caO
\end{equation}
defines an injection of $L$ orbits on $\frg(-1)$ into $K$ orbits on $\caP$.  Note that the definition of $\eps$ (and hence $Q_\caO$) depends on the choice of ordering of the elements of $\mathscr P$.  Let
\begin{equation}
\label{e:Y}
Y = \bigcup_{\caO} Q_\caO
\end{equation}
where $\caO$ ranges over all orbit of $L$ on $\frg(-1)$.  Thus, by definition, $Y$ is the $K$ saturation
of the image of 
$\epsilon$.\qed
\end{definition}

\bigskip

\noindent
The next examples investigate some instances when \eqref{e:key} holds (or cannot hold).

\begin{example}
\label{ex:rho-check}
Suppose $\lam = \rho^\vee$, the half-sum of the coroots corresponding to the roots of $\frt$ in $\frn$.  As $\rho^\vee$ is regular, $\frp{} = \frb$.   Let $\mathscr{P}$ denote the set of parabolic subalgebras are minimal among those that properly contain $\frb$. If we enumerate the simple roots of $\t$ in $\frn$ as $\alpha_1, \dots, \alpha_\ell$,
then we can enumerate $\mathscr{P}$ as $\frp_1, \dots \frp_\ell$ with
\[
\frp_i = \frg_{-\alpha_i} \oplus \frb,
\]
where $\frg_{-\alpha_i}$ is the root space for $-\alpha_i$ in $\frg$.  Thus 
\[
 \frl_i \cap \overline{\fru}{} = \frg_{-\alpha_i}
 \]
Since $\frg(-1)$
is the span of the negative simple root spaces, $\mathscr{P}$ satisfies \eqref{e:key0} \and \eqref{e:key}.  This is the setting of \cite{bt}.\qed
\end{example}

\begin{example}
\label{ex:classical}
Suppose $G$ is a classical group, and $\lambda$ is an arbitrary (possibly singular) integral element.    Let $\mathscr{P}$ denote the set of parabolic subalgebras are minimal with respect to the properties of: (1) properly containing $\frp$; and (2) having a levi factor that meets $\frg(-1)$ in a nonzero subspace.  
Then $\mathscr{P}$ always satisfies \eqref{e:key0} and \eqref{e:key}.  

To see this, first take the case of $G = \GL(n,\bbC)$, let $\frb$ 
be upper-triangular matrices with $\frt$ the diagonal ones.  Fix
\begin{equation}
\label{e:lam}
\lambda = (\overbrace{a_1, \cdots, a_1}^{n_1}, \overbrace{a_2, \cdots, a_2}^{n_2}, \dots, \overbrace{a_k, \cdots, a_k}^{n_k})
\end{equation}
with each $a_i$ an integer, $a_i > a_{i+1}$ and $n=n_1+\dots+n_k$.  Then $\frp{}$ consists of block-upper triangular matrices with diagonal blocks of size $n_1, \dots, n_k$.  Set 
\[
S = \{ j \; | \; a_j-a_{j+1} = 1\}.
\]
The space $\frg(-1)$ identifies with a subspace of 
block matrices just below the diagonal,
\begin{equation}
\label{e:block}
\frg(-1) \simeq \bigoplus_{j\in S}\mathrm{Mat}_{n_{j+1},n_{j}} 
\end{equation}
A minimal parabolic $\frp_j$containing $\frp$ is obtained by enlarging the adjacent Levi factors $\frg\frl(n_{j}) \oplus \frg\frl(n_{j+1})$ to $\frg\frl(n_{j}+n_{j+1})$, and $\frp_j$ has nonzero intersection with $\frg(-1)$ when $j\in S$.  
Thus $\mathscr P = \{\frp_j \; | \; j \in S\}$.
The element
$\frp_j$ has
\[
\frl_j\cap \overline{\fru}{}
\]
consisting of the block lower-triangular matrices in $\frg\frl(n_{j}+n_{j+1})$, namely $\mathrm{Mat}_{n_{j+1},n_{j}}$.  Comparing with \eqref{e:block}, one sees \eqref{e:key} holds. 
Note that $\lam = \rho^\vee$ in the previous example is simply the case when all block sizes are 1.

The other classical cases are similar.  For example, suppose $G = \Sp(2n,\bbC)$ and 
$\lam$ is as in \eqref{e:lam} in standard coordinates.  Suppose all entries $a_i$ are half-integers (but not integers). Then $\frp$ has Levi factor $\frg\frl(n_1)\oplus \cdots \oplus \frg\frl(n_k)$.  Once again we obtain a minimal parabolic $\frp_j$ containing $\frp$ by expanding adjacent
factors $\frg\frl(n_j) \oplus \frg\frl(n_{j+1})$ in the Levi factor for $\frp$ to $\frg\frl(n_j+n_{j+1})$, and $\frp_j$ meets $\frg(-1)$ nontrivially if $j \in S$ as defined above.  Then $\mathscr P$ consists of the $\frp_j$ for $j \in S$ plus possibly one other element: if $a_k=1/2$, then $\mathscr P$ contains $\frp_\circ$ whose Levi factor is $\frg\frl(n_1)\oplus \cdots \oplus \frg\frl(n_{k-1}) \oplus \mathfrak s \mathfrak p(2n_k)$.   If instead all entries of $\lambda$ are nonnegative integers, then $\frp$ has Levi factor $\frg\frl(n_1)\oplus \cdots \oplus \frg\frl(n_k)$ if $a_k\neq 0$ and $\frg\frl(n_1)\oplus \cdots \oplus \frg\frl(n_{k-1})\oplus \frs\frp(2n_k)$ if $a_k=0$.  As before
$\frp_j$ may be defined by collapsing adjacent Levi factors for $j\in S$, and this time if $k-1\in S$ and $a_k=0$, define $\frp_{k-1}$ to have Levi factor 
$\frg\frl(n_1)\oplus \cdots \oplus \frg\frl(n_{k-2})\oplus \frs \frp(2(n_{k-1}+n_k))$. Once again one can easily verify
that  $\mathscr P = \{\frp_j \; | \; j \in S\}$ satisfies \eqref{e:key0} and \eqref{e:key}.
\qed
\end{example}

\begin{example}
\label{ex:1+N}
Continuing the example of $G = \GL(n,\bbC)$ in Example \ref{ex:classical}, if we order the set $\mathscr P$ as described there, then one may quickly verify that for $x \in \frg(-1)$,
\[
\epsilon(x) = (\mathrm{Id}_n + x) \cdot \frp.
\]
This is the map that is used in \cite{ct} to define $\caO \mapsto Q_\caO$.  But note that we may take {\em any} ordering of the set $\mathscr P$ and thus define different maps $\caO \mapsto Q_\caO$
which will have the properties that we describe in Theorem \ref{t:main}. \qed
\end{example}

\begin{example}
\label{ex:f4}
Let $G$ be simple of type F4.  Suppose $\lambda$ is one-half the middle element of a Jacobson-Morozov triple for the nilpotent orbit labeled in the Bala-Carter classification by F4(a3).  The weighted Dynkin diagram of this orbit (in the standard Bourbaki order) is 0200.   Thus $\frp{}$ is the maximal parabolic corresponding the long middle root on the Dynkin diagram of F4.  The opposite nilradical $\overline{\fru}{}$ is a three-step nilpotent algebra whose first step is $\frg(-1)$. Since $\frp{}$ is maximal, the only possibility is for $\mathscr P$ to consist of a single algebra, namely all of $\frg$.  But we have already remarked that $\frg \cap \overline{\fru}{} = \bar \fru{}$ properly contains $\frg(-1)$.  Thus there is no choice of $\mathscr{P}$ that
satisfies \eqref{e:key}.\qed
\end{example}

\begin{example}
\label{ex:g2}
Let $G$ be simple of type G2.  Suppose $\lambda$ is one-half the middle element of a Jacobson-Morozov triple for the nilpotent orbit labeled in the Bala-Carter classification by G2(a1).  Thus $\frp{}$ is the maximal parabolic corresponding to the short simple root.  Its nilradical properly contains $\frg(-1)$, so once again there is no choice of $\mathscr{P}$ that
satisfies \eqref{e:key}.  However, $\overline{\fru}{}$ in this case is a two-step nilpotent algebra whose first step is $\frg(-1)$.  We will give a related argument  in Section \ref{s:twostep} which handles this case.
\qed
\end{example}

\begin{remark}
\label{r:integral}
If $\lambda$ is not integral, one can repeat the constructions above replacing $G$ by $G(\lambda)$,
the centralizer in $G$ of $\exp(2i\pi\lambda)$.  The Lie algebra of $G(\lambda)$ is the sum of the integral
eigenspaces of $\ad(\lambda)$ and $y(\lam)$ is still an element whose square is central in $G(\lambda)$.  So the definitions above carry over without change.

Note also that above we restricted attention to the $L$ orbits on $\frg(-1)$.   One could instead consider the orbits of $L$ on an 
general eigenspace $\frg(c)$ for $c\in \bbC^\times$. 
Since
the $-1$ eigenspace of $\ad(\lambda)$ is the $c$-eigenspace of $\ad(\lambda')$ for $\lambda'=-\lambda/c$, the study of $L$ orbits on $\frg(c)$ for
$\lambda$ is equivalent to the study of $L$ orbits on $\frg(-1)$ for $\lambda'$.\qed
\end{remark}

\section{statement of main results}
\label{s:orbits}
\label{s:main}

The goal of this section is to state Theorem \ref{t:main} describing how $\caO \mapsto Q_\caO$ in Definition \ref{d:eps} preserves the singularities of the closure of $\caO$ in a precise sense.  In order to do so, we need some notation.

\subsection{Notation.}\label{s:cgp}
Suppose $X$ is a complex algebraic variety on which a complex algebraic group $H$ acts with finitely many orbits.
Let  $\caC(H,X)$ be the category of $H$-equivariant constructible sheaves on $X.$ 
Write 
 $\caP(H,X)$   for  the   category of $H$-equivariant
perverse sheaves on $X.$    

Irreducible objects in both categories are parametrized by
the set $\Xi(H,X)$  consisting of  pairs
$(Q,\caV)$ with $Q$ an orbit of $H$ on $X$ and $\caV$ an irreducible $H$-equivariant local system supported on $Q$.
For $\gamma \in \Xi(H,X)$, we write $\con(\gamma)$ and $\per(\gamma)$ for the corresponding irreducible
constructible and perverse sheaves.  

By taking Euler characteristics, we identify the
Grothendieck group of the categories $\caP(H,X)$ and  $\caC(H,X).$ In this way, we can consider the change of 
basis matrix,
\begin{equation}
\label{e:gengeomdecomp}
[\text{per}(\gamma)] =  \underset{\psi \in \Xi(H,X) }{\sum } (-1)^{d(\psi)} \;  C^g_{\psi, \gamma} [\c(\psi)];
\end{equation}
here $\psi = (Q_\psi, \caV_\psi)$ and  $d(\psi) = \dim(Q_{\psi}).$
The matrix $\left(C^g(\psi, \gamma)\right)$ is called the {geometric multiplicity matrix.} Let
$P_{\psi,\gamma} \in \bbZ[q]$ denote the graded occurrence of $\con(\psi)$ in the cohomology sheaves of $\per(\gamma)$.  More precisely, define the coefficient
of $q^i$ in $P_{\psi,\gamma}$ to be the multiplicity of $\con(\psi)$ in the $i$th cohomology sheaves of $\per(\gamma)$.
(In our applications below we will have vanishing in odd degrees.) 
Thus, up to a sign, 
\begin{equation}
\label{e:p}
P_{\psi,\gamma}(1) = C^g_{\psi, \gamma}.
\end{equation}
Finally, given an $H$ orbit $Q$ on $X$, let $m_Q^\mic$ denote the $\bbZ$-valued linear functional on the Grothendieck group of $\caP(H,X)$ that assigns to an irreducible perverse sheave the multiplicity of the conormal bundle to $Q$ in its characteristic cycle.  The notation is meant to indicate that $m_Q^\mic$ is a microlocal multiplicity.

\medskip

\subsection{Statement of Main Results}
\label{s:main}
Let $A_L(x)$ denote the component group of the centralizer in $L$ of $x \in \frg(-1)$, and let $A_K(\epsilon(x))$
denote the component group of the centralizer in $K$ of $\epsilon(x)$.  Since $\epsilon$ is $L$ equivariant, there is a natural map $A_L(x) \rightarrow A_K(\epsilon(x))$.  Since $A_K(\epsilon(x))$ is abelian (an elementary abelian 2-group) with only one-dimensional irreducible representations, composition defines a map
on irreducible representations,
\begin{equation}
\label{e:A}
A_K(\epsilon(x))^{\widehat{\phantom{x}}} \lra A_L(x)^{\widehat{\phantom{x}}}.
\end{equation}
Fix $\gamma = (Q_\caO,\caL) \in \Xi(K,Y)$, with $Q_\caO = K\cdot \epsilon(x)$ for $x \in \frg(-1)$.  Then $\caL$ is parametrized by an irreducible representation of $A_K(\epsilon(x))$.  By \eqref{e:A}, this maps to an irreducible representation of $A_L(x)$, and hence an
irreducible local system $\caL'$ on $\caO = L \cdot x$.  Define
\begin{equation}
\label{e:phi}
\varphi  \; : \; \Xi(K,Y) \lra \Xi(L,\frg(-1))
\end{equation}
by
\[
\varphi(Q_\caO,\caL) = (\caO, \caL').
\]
More conceptually, this is just the pullback via $\eps$ of irreducible local systems from $Y$ to $\frg(-1)$.
\begin{theorem}
\label{t:main}
Recall the setting of \eqref{e:QO} and the definition of $Y$ in \eqref{e:Y}.  In particular, recall that we assume that there exists a fixed ordered set of parabolic subalgebras $\mathscr P$ satisfying \eqref{e:key0}
and \eqref{e:key} that is used to define $\epsilon: \frg(-1) \ra \caP$.  Recall the definition of $\varphi$ of \eqref{e:phi}, and of the geometric multiplicity matrices and intersection homology polynomials of Section \ref{s:cgp}.  Then for 
$\psi,\gamma \in \Xi(K,Y)$,
\begin{equation}
\label{e:mainp}
P_{\psi,\gamma} = P_{\varphi(\psi), \varphi(\gamma)};
\end{equation}
in particular,
\begin{equation}
\label{e:mainc}
C^g_{\psi,\gamma} = C^g_{\varphi(\psi), \varphi(\gamma)}.
\end{equation}
Finally, fix $\caO = L \cdot x$, $Q_\caO = K\cdot \epsilon(x)$, and $\gamma \in \Xi(K,Y)$.  The multiplicity of the conormal bundle to $Q_\caO$ in the characteristic cycle of $\per(\gamma)$ equals the multiplicity of the conormal bundle to $\caO$ in $\per(\varphi(\gamma))$,
\begin{equation}
\label{e:mainCC}
m^\mic_{\caO}(\per(\varphi(\gamma))) = m^\mic_{Q_\caO}(\per(\gamma)).
\end{equation}

\end{theorem}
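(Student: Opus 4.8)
The plan is to deduce all three identities from a single geometric statement: that $\epsilon$ exhibits $\frg(-1)$ as a transverse slice to the $K$-action along $Y$. More precisely, I would first prove that $\epsilon$ is a locally closed embedding and that the action map
\[
\alpha \;:\; K\times^L\frg(-1) \lra \caP, \qquad [k,x]\longmapsto k\cdot\epsilon(x),
\]
well defined because $\epsilon$ is $L$-equivariant, is an isomorphism onto the locally closed subvariety $Y$ carrying the stratification of $K\times^L\frg(-1)$ by the sets $K\times^L\caO$ onto the stratification of $Y$ by the orbits $Q_\caO$. Since $\alpha(K\times^L\caO)=K\cdot\epsilon(\caO)=Q_\caO$, once $\alpha$ is an isomorphism we get $\overline{Q_\caO}=\alpha(K\times^L\overline\caO)\subseteq Y$, so every orbit closure in sight lies in $Y$, the intersection homology and characteristic cycle data are computed internally to $Y$, and $Y\cong K\times^L\frg(-1)$ is smooth. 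Note also that $\dim K/L$ is even, since the Killing form pairs $\frg(2i)$ with $\frg(-2i)$.

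Granting this, I would finish formally. Composing the Bernstein--Lunts induction equivalence $D^b_L(\frg(-1))\xrightarrow{\ \sim\ }D^b_K(K\times^L\frg(-1))$ with $\alpha^{*}$ yields an equivalence $\Phi\colon D^b_L(\frg(-1))\xrightarrow{\ \sim\ }D^b_K(Y)$ that is $t$-exact up to the (even) shift $\dim K/L$, carries simple perverse sheaves to simple perverse sheaves, and on irreducible local systems sends $(\caO,\caL')$ to $(Q_\caO,\caL)$ with $\caL$ restricting to $\caL'$ along $\epsilon|_\caO$; that is, $\Phi$ inverts the map $\varphi$ of \eqref{e:phi}. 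In particular $\varphi$ is a bijection and the map \eqref{e:A} is an isomorphism, both because $\alpha$ being an isomorphism forces the stabilizer of $\epsilon(x)$ in $K$ to equal the stabilizer of $x$ in $L$. Now $\Phi$ matches the intersection cohomology complex of $\overline{Q_\gamma}$ with that of the corresponding orbit closure in $\frg(-1)$, up to the common even shift, while $\mathrm{codim}(Q_\psi,\overline{Q_\gamma})=\mathrm{codim}(\caO_\psi,\overline{\caO_\gamma})$, so the normalized local intersection cohomology polynomials coincide: this is \eqref{e:mainp}, and \eqref{e:mainc} follows by setting $q=1$ (the evenness of $\dim K/L$ also making the signs in \eqref{e:gengeomdecomp} match). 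For \eqref{e:mainCC} I would use the diagram $\frg(-1)\xleftarrow{\,p\,}K\times\frg(-1)\xrightarrow{\,q\,}K\times^L\frg(-1)\xrightarrow{\,\alpha\,}Y$ with $p$ and $q$ smooth surjections ($q$ an $L$-torsor): smooth pullback multiplies characteristic cycles by pulling back conic Lagrangian cycles, sending the conormal to a stratum to the conormal of its smooth preimage with unchanged multiplicity; since $q^{-1}(K\times^L\caO)=p^{-1}(\caO)$ and $p^{*}$, $q^{*}$ (suitably shifted) intertwine $\Phi$, chasing microlocal multiplicities around the diagram gives $m^\mic_{Q_\caO}(\per(\gamma))=m^\mic_{\caO}(\per(\varphi(\gamma)))$.

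It remains to establish the geometric statement, and this is where \eqref{e:key0} and \eqref{e:key} are used. For injectivity of $\epsilon$: write $x=x_1+\cdots+x_\ell$ as in \eqref{e:key} and $g_x=\exp(x_1)\cdots\exp(x_\ell)$, so $\epsilon(x)=g_x\cdot\frp$; then $\epsilon(x)=\epsilon(x')$ gives $g_{x'}^{-1}g_x\in P$, and comparing $\overline\fru$-components while peeling off the direct summands $\frl_i\cap\overline\fru\subseteq\frg(-1)$ one at a time, in the chosen order, forces $x_i=x_i'$ for all $i$ — an induction on $\ell$, using that the factors $\exp(x_j)$ with $j>i$ only modify components lying in deeper pieces of the natural filtration of $\overline\fru$. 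For the immersion and transversality of $\alpha$: at the base point $\frp$ one has $d\epsilon_0(\frg(-1))=\frg(-1)\subseteq\frg/\frp$, and since $\frk+\frp$ contains only the eigenspaces $\frg(i)$ with $i\ge 0$ or $i$ negative and even, it misses the odd eigenspace $\frg(-1)$, so $d\epsilon_0(\frg(-1))$ is transverse to $T_\frp(K\cdot\frp)$ — the parity of the eigenvalue $-1$ is essential here. At a general $x$ one computes $d\epsilon_x(v)\equiv\sum_i\Ad\big(\exp(x_1)\cdots\exp(x_{i-1})\big)(v_i)\pmod{\Ad(g_x)\frp}$ for $v=\sum v_i$ with $v_i\in\frl_i\cap\overline\fru$, and one must show this is still injective and transverse to $T_{\epsilon(x)}(K\cdot\epsilon(x))$ with intersection $d\epsilon_x([\frl,x])$. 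Granting these, $\alpha$ is an injective immersion from the smooth irreducible variety $K\times^L\frg(-1)$ onto the $K$-stable constructible set $Y$, hence an isomorphism onto the locally closed subvariety $Y$, compatible with orbit stratifications since $\alpha(K\times^L\caO)=Q_\caO$.

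The main obstacle is precisely the immersion--transversality statement at points $x\neq 0$. Because the factors $\exp(x_i)$ do not commute, $d\epsilon_x$ is not simply the inclusion $\frg(-1)\hookrightarrow\frg/\Ad(g_x)\frp$, and one must rule out that the lower-order corrections $\Ad(\exp x_1\cdots\exp x_{i-1})(v_i)-v_i$ conspire to create a kernel or destroy transversality. This is where the chosen ordering of $\mathscr P$ and the precise form of \eqref{e:key}, together with the non-redundancy guaranteed by \eqref{e:key0}, are indispensable, and where the explicit block models of Example \ref{ex:classical} — in which $\epsilon$ degenerates to $x\mapsto(\mathrm{Id}+x)\cdot\frp$ for $\GL$ and to similarly explicit maps in types $\mathrm B$, $\mathrm C$, $\mathrm D$ — make the verification tractable. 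Carrying this out uniformly for every $\mathscr P$ satisfying the hypotheses is the technical heart of the argument; everything downstream of it is formal.
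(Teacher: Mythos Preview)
Your central geometric claim—that $\alpha\colon K\times^L\frg(-1)\to\caP$ is an isomorphism onto $Y$—is false in general, and the rest of the argument collapses with it. A dimension count already rules it out: since $\frk=\bigoplus_i\frg(2i)$ and the invariant form pairs $\frg(2i)$ with $\frg(-2i)$, one has $\dim K-\dim L=2\dim(\bar\fru\cap\frk)$, so $\dim\bigl(K\times^L\frg(-1)\bigr)=2\dim(\bar\fru\cap\frk)+\dim\frg(-1)$; on the other hand $\dim Y=\dim Q_{\{0\}}+\dim\frg(-1)=\dim(\bar\fru\cap\frk)+\dim\frg(-1)$. These differ by $\dim(\bar\fru\cap\frk)$, which is positive whenever $K\neq L$, i.e.\ whenever $\ad(\lambda)$ has any nonzero even eigenvalue. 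Concretely, already at $x=0$ the fiber of $\alpha$ over $\frp$ is $(K\cap P)/L$, with Lie algebra $\fru\cap\frk$; for $G=\SL(3,\bbC)$ and $\lambda=\rho^\vee$ one finds $\dim\bigl(K\times^L\frg(-1)\bigr)=4$ while $Y=\caP$ has dimension $3$. Consequently $\mathrm{Stab}_K(\epsilon(x))$ is typically strictly larger than $\mathrm{Stab}_L(x)$, your assertion that $\varphi$ is a bijection fails (Remark~\ref{r:main} notes explicitly that $\varphi$ need not be surjective, for instance when $A_L(x)$ is nonabelian), and the would-be equivalence $D^b_L(\frg(-1))\simeq D^b_K(Y)$ does not exist.

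The paper repairs this by interposing the group $K\cap\bar P$, whose Levi is $L$ and whose unipotent radical $\bar U\cap K$ acts freely on the relevant set (Lemma~\ref{l:freeaction}). One first shows (Proposition~\ref{p:gfinite} and Corollary~\ref{c:opendense}) that $[K\cap\bar P]\cdot\epsilon(x)$ is open and dense in $K\cdot\epsilon(x)$, so that $K$-equivariant perverse sheaves on $Y$ restrict without loss to $K\cap\bar P$-equivariant ones on the open piece $[K\cap\bar P]\cdot\epsilon(\frg(-1))$; this produces the map $\varphi_1$. Then the correct induced-bundle statement (Proposition~\ref{p:ib}) is that $[K\cap\bar P]\times_L\frg(-1)\to[K\cap\bar P]\cdot\epsilon(\frg(-1))$ is an isomorphism—here the dimensions \emph{do} match—yielding $\varphi_2$, and one checks $\varphi=\varphi_2\circ\varphi_1$. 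Your transversality analysis at general $x$ would at best reproduce the content of Lemma~\ref{l:freeaction} and Proposition~\ref{p:gfinite}, but it cannot salvage the $K$-level induced bundle; the two-step factorization through $K\cap\bar P$ is not optional.
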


\noindent Before turning to the proof in Section \ref{s:proof}, we make a few remarks.

\begin{remark}
\label{r:main}
Theorem \ref{t:main} is most powerful when  the map $\varphi$ is surjective for some
choice of $\mathscr P$.  In this
case, every polynomial arising from the $L$ orbits on $\frg(-1)$ is matched with a Kazhdan-Lusztig-Vogan polynomial.  In Example \ref{ex:classical2} (extending Example \ref{ex:classical}), we sketch that this is the case for some classical subgroups of $\GL(n,\C)$.  But note that there are some cases in spin groups and exceptional groups, for example, where $A_L(x)$ is nonabelian, and therefore has higher dimensional irreducible local systems as elements of $\Xi(L,\frg(-1))$.  In these cases, $\varphi$ can never be surjective (even when a choice of $\mathscr P$ satisfying \eqref{e:key0} and \eqref{e:key} exists).
\qed
\end{remark}

\begin{remark}
\label{r:ct}
When $G = \GL(n,\bbC)$ and the choice of $\mathscr P$ (and its ordering) is the one described in Example \ref{ex:1+N}, then \eqref{e:mainc} in Theorem \ref{t:main} is \cite[Theorem 2.5]{ct}.
\qed
\end{remark}

\begin{remark}
\label{r:O}
We describe a version of Theorem \ref{t:main} that replaces $K$ orbits with $P$ orbits, and thus matches intersection homology polynomials for $L$ orbits on $\frg(-1)$ with classical parabolic Kazhdan-Lusztig polynomials.  Loosely speaking we replace every occurrence of $K$ with $P$.  In more detail, assuming the existence
of $\mathscr P$, we can define a map that takes  $\caO = L\cdot x$ for $x \in \frg(-1)$ to 
$Q_\caO = P \cdot\epsilon(\caO)$ where $\eps$ is defined as in Definition \ref{d:eps}.  If we once again set $Y$ to be the union of all the various $Q_\caO$,  then $\epsilon$ sends $\frg(-1)$ to $Y$.  Just as above, we obtain a map
\[
\varphi \; : \; \Xi(P,Y) \lra \Xi(L,\frg(-1)).
\]
The proof of Corollary \ref{c:opendense} (see Remark \ref{r:opendenseO}) will apply to show $\caO\simeq \epsilon(\caO)$ is open and dense in $ Q_\caO$.  This allows one to directly conclude
\begin{equation}
\label{e:mainpO}
P_{\psi,\gamma} = P_{\varphi(\psi), \varphi(\gamma)},
\end{equation}
and similarly match microlocal multiplicities.  (See the argument and references in Section \ref{s:opendense}.) When $G = \GL(n,\C)$ and the choice of ordering
on $\mathscr P$ is as in Example \ref{ex:1+N} (so $\epsilon(x) = \mathrm{Id}_n + x$), \eqref{e:mainpO} is exactly the main result of \cite{zel}.

Note, however, that there are no nontrivial local systems in $\Xi(P,Y)$.  So there is no hope in 
matching polynomials that arise for nontrivial local systems for $L$ orbits on $\frg(-1)$ with classical Kazhdan-Lusztig polynomials; cf. Remark \ref{r:main}.  This is a reason for using $K$ orbits in order to match more general polynomials for $\frg(-1)$. \qed  
\end{remark}

\begin{remark}
\label{r:disconnected}
Set $K' = G^\theta$ as in the introduction; so $K'$ is potentially disconnected.  In the setting of Theorem \ref{t:main}, we can copy the definitions above to define
\[
\varphi'  \; : \; \Xi(K',Y) \lra \Xi(L,\frg(-1)).
\]
Fix $\psi,\gamma \in \Xi(K',Y)$ and write $Q_\gamma$ for $K'$ orbit that is the support of $\gamma$.  If we further assume that $Q_\gamma$ is irreducible (which of course is
automatic if $K'$ is connected), then the proof of Theorem \ref{t:main} that we give below will show
\begin{equation}
\label{e:k'}
P_{\psi,\gamma} = P_{\varphi'(\psi), \varphi'(\gamma)}
\end{equation}
and that the analogous conclusion of \eqref{e:mainCC} also holds.  If we do not assume $Q_\gamma$ is irreducible, then \eqref{e:k'} can fail; see \cite[Remark 3.6]{bt} for a
discussion of an example in $\SO(8)$.
The one place in the argument below that we need $Q_\gamma$ to be irreducible is to deduce the 
conclusion of Corollary
\ref{c:opendense}.  
\end{remark}

\section{proof of theorem \ref{t:main}}
\label{s:proof}

\subsection{Preliminary Results}
\label{s:prelim}

As a first step toward Theorem \ref{t:main}, we
show $\caO \mapsto Q_\caO$ preserves dimensions; see Proposition \ref{p:gfinite} below.   
We start with some preliminaries, following the approach of \cite{bt} closely.

\begin{lemma}
\label{l:inj}
In the setting of Theorem \ref{t:main}, if $\caO = L \cdot x$ is an orbit of $L$ on $\frg(-1),$ then
\[
\dim(\epsilon(\caO)) = \dim(\caO).
\]
\end{lemma}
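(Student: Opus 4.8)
The whole statement reduces to the assertion that $\epsilon\colon\frg(-1)\to\caP$ is injective, and this in turn follows by factoring $\epsilon$ through the ``big cell'' of $\caP$ and observing that, read modulo the deeper part of the opposite nilradical, $\epsilon$ is essentially the identity map on $\frg(-1)$. This generalizes the corresponding step in \cite{bt}, where $\frp=\frb$.

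First I would record the reduction. Since $\epsilon$ is $L$-equivariant, $\epsilon(\caO)=L\cdot\epsilon(x)$ is a single $L$-orbit and $\epsilon$ restricts to a surjective $L$-equivariant morphism $\caO\to\epsilon(\caO)$. If $\epsilon$ is injective on all of $\frg(-1)$, then the stabilizers in $L$ of $x$ and of $\epsilon(x)$ coincide: if $l\cdot\epsilon(x)=\epsilon(x)$ then $\epsilon(l\cdot x)=\epsilon(x)$, so $l\cdot x=x$. Hence $\dim\caO=\dim L-\dim L_x=\dim L-\dim L_{\epsilon(x)}=\dim\epsilon(\caO)$, and it suffices to prove $\epsilon$ injective.

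Next I would factor $\epsilon$ through the big cell. Let $\overline U\subseteq G$ be the unipotent subgroup with Lie algebra $\overline{\fru}$; then $\exp\colon\overline{\fru}\xrightarrow{\sim}\overline U$ is an isomorphism of varieties, and since $\overline U\cap P=\{1\}$ the orbit map $\overline U\to\caP$, $\bar u\mapsto\bar u\cdot\frp$, is an open immersion onto the big cell. Because $\frl_i$ and $\overline{\fru}$ are $\ad(\lambda)$-stable, $\frl_i\cap\overline{\fru}$ is graded for the $\ad(\lambda)$-eigenspace decomposition, and \eqref{e:key} then forces each $\frl_i\cap\overline{\fru}$ to lie inside $\frg(-1)$ (a nonzero component in some $\frg(j)$ with $j\le-2$ would violate the direct sum equality). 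Hence, writing $x=x_1+\cdots+x_\ell$ according to \eqref{e:key}, each $\exp(x_i)$ lies in $\overline U$, so
\[
\mu(x):=\exp(x_1)\exp(x_2)\cdots\exp(x_\ell)\in\overline U,\qquad \epsilon(x)=\mu(x)\cdot\frp,
\]
and injectivity of $\epsilon$ is equivalent to injectivity of $\mu\colon\frg(-1)\to\overline U$.

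Finally I would pass to an abelian quotient. Let $N$ be the connected normal subgroup of $\overline U$ with Lie algebra $\bigoplus_{j\le-2}\frg(j)$, which is an ideal of $\overline{\fru}$ since $[\frg(i),\frg(j)]\subseteq\frg(i+j)$. The quotient Lie algebra $\overline{\fru}/\bigl(\bigoplus_{j\le-2}\frg(j)\bigr)\cong\frg(-1)$ is abelian, so $\overline U/N$ is a vector group and $\exp$ identifies it with $\frg(-1)$. By naturality of $\exp$ under $\overline U\to\overline U/N$, and because $x_i\in\frg(-1)$, the image of $\exp(x_i)$ in $\overline U/N$ is $x_i$ itself under this identification; since $\overline U/N$ is abelian, the image of $\mu(x)$ is $x_1+\cdots+x_\ell=x$. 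Thus the composite $\frg(-1)\xrightarrow{\mu}\overline U\to\overline U/N$ is the identity, so $\mu$, and hence $\epsilon$, is injective. The only step requiring care is this last one: that $\exp$ of a sum of $\frg(-1)$-vectors, read modulo $\bigoplus_{j\le-2}\frg(j)$, collapses to the identity, and that this survives the ordered product defining $\mu$ precisely because the relevant quotient is abelian. Everything else is formal.
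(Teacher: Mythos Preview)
Your proof is correct and takes essentially the same approach as the paper: both arguments rest on the observation that $\exp(x_1)\cdots\exp(x_\ell)$ agrees with $\exp(x)$ modulo the subgroup with Lie algebra $\bigoplus_{j\le -2}\frg(j)$, combined with the injectivity of the big-cell map $\overline U\to\caP$. The paper phrases this via Baker--Campbell--Hausdorff and a direct comparison of stabilizers $Z_L(\epsilon(x))\subset Z_L(x)$, whereas you pass to the abelian quotient $\overline U/N$ and establish global injectivity of $\epsilon$ first (a fact the paper records separately in Lemma~\ref{l:freeaction}); the content is the same.
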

\begin{proof}
Since $\epsilon$ is $L$-equivariant, $Z_L(x) \subset  Z_L(\epsilon(x))$.  So the result follows from the
other containment
\begin{equation}
\label{e:supset}
Z_L(x) \supset Z_L(\epsilon(x)).
\end{equation}
Write $x = \sum_j x_i$ with possibly some of $x_j$'s equal to zero.  Because $[\frg(-1),\frg(-1)] \subset 
\frg(-2)$, there is a 
$z \in \bigoplus_{k\leq -2} \frg(k)$ so that
\begin{align*}
\epsilon(x) &:= \exp(x_1) \text{exp}(x_2) \cdots \text{exp}(x_{\ell})\cdot \frp \\
&=\exp(x_1+x_2+\cdots +x_{\ell}+ z )\cdot \frp \\
&=\exp(x + z)\cdot  \frp.
\end{align*}
If $l \in L$ centralizes $\epsilon(x)$, it thus centralizes $x+z$.  Since $L$ preserves the grading of $\frg = \bigoplus_k \frg_k$, if $l\in L$ centralizes $x +z$, it must centralize
$x$, and so \eqref{e:supset} follows.
\end{proof}
\medskip

\begin{lemma} \label{l:freeaction}
In the setting of Theorem \ref{t:main}, write $\bar P = L\bar U$ for the opposite parabolic subgroup to $P=LU$.  Then $\bar{U}\cap K$ acts  freely on 
\[
[\bar{U} \cap K] \cdot \epsilon (\frg(-1)).
\]
Moreover, for all $ x \in \frg(-1)$,
\[
\left( [\bar{U} \cap K] \cdot \epsilon (x) \right ) \cap \epsilon(\frg(-1)) = \epsilon(x).
\]
In particular, $\eps$ is injective.
\end{lemma}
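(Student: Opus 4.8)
The plan is to reduce the whole statement to the geometry of the open (``big'') cell in $\caP$. Recall from the proof of Lemma~\ref{l:inj} that for $x\in\frg(-1)$ one has $\eps(x)=\exp(x+z)\cdot\frp$ for some $z\in\bigoplus_{k\le -2}\frg(k)$, so the element $n(x):=\exp(x_1)\cdots\exp(x_\ell)=\exp(x+z)$ lies in $\bar U$ and $\eps(x)=n(x)\cdot\frp$. Thus $\eps(\frg(-1))$ is contained in $\bar U\cdot\frp$, the open $\bar U$-orbit on $\caP$, and the orbit map $u\mapsto u\cdot\frp$ restricts to an isomorphism of $\bar U$ onto $\bar U\cdot\frp$; in particular $\bar U\cap P=\{e\}$, and for any $g\in\bar U$ an element $u\in\bar U$ lies in $gPg^{-1}$ only if $g^{-1}ug\in\bar U\cap P=\{e\}$, i.e.\ only if $u=e$. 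So $\bar U\cap gPg^{-1}=\{e\}$ for every $g\in\bar U$.

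Given this, the freeness is immediate. The set $Z:=[\bar U\cap K]\cdot\eps(\frg(-1))$ is a union of $\bar U\cap K$-orbits, and every such orbit contains a point of the form $\eps(x)$; since the stabilizers of the points of a single orbit are conjugate in $\bar U\cap K$, it is enough to show that the stabilizer of $\eps(x)$ in $\bar U\cap K$ is trivial for each $x$. But the stabilizer in $G$ of the point $\eps(x)=n(x)\cdot\frp$ is $n(x)Pn(x)^{-1}$ with $n(x)\in\bar U$, and by the previous paragraph $\bar U\cap n(x)Pn(x)^{-1}=\{e\}$; a fortiori this holds after intersecting with $\bar U\cap K$.

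For the second claim I would use the homomorphism that reads off the lowest graded piece. Since $\bigoplus_{k\le -2}\frg(k)$ is an ideal of $\overline{\fru}{}$, the subgroup $N:=\exp\!\left(\bigoplus_{k\le -2}\frg(k)\right)$ is normal in $\bar U$ with $\bar U/N\cong(\frg(-1),+)$; let $\pi\colon\bar U\to\frg(-1)$ be the resulting surjection, so $\pi(\exp w)$ is the $\frg(-1)$-component of $w$. It is a group homomorphism because every bracket term of degree $\ge 2$ in the Baker--Campbell--Hausdorff expansion of $\log(\exp w\,\exp w')$ is an iterated bracket of at least two elements of $\overline{\fru}{}=\bigoplus_{k\le -1}\frg(k)$, hence lies in $\bigoplus_{k\le -2}\frg(k)$. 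Now $\pi(n(x))=x$, and $\bar U\cap K\subseteq\ker\pi=N$ since $\frk\cap\frg(-1)=0$ ($-1$ is odd while $\frk=\bigoplus_i\frg(2i)$), so $\bar U\cap K=\exp(\overline{\fru}{}\cap\frk)\subseteq N$. If $u\cdot\eps(x)=\eps(x')$ with $u\in\bar U\cap K$ and $x'\in\frg(-1)$, then $un(x)\cdot\frp=n(x')\cdot\frp$ with $un(x),n(x')\in\bar U$, so $un(x)=n(x')$ by the injectivity of $\bar U\hookrightarrow\caP$; applying $\pi$ gives $x=\pi(u)+\pi(n(x))=\pi(n(x'))=x'$, whence $n(x')=n(x)$, $u=e$, and $u\cdot\eps(x)=\eps(x)$. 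This gives $\left([\bar U\cap K]\cdot\eps(x)\right)\cap\eps(\frg(-1))=\{\eps(x)\}$, and the injectivity of $\eps$ is the special case $u=e$: $\eps(x)=\eps(x')$ forces $n(x)=n(x')$, hence $x=\pi(n(x))=\pi(n(x'))=x'$.

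I do not expect a genuine obstacle here; the argument is just the big-cell identification of $\eps(\frg(-1))$ combined with grading bookkeeping. The one point needing a little care is checking that reading off the $\frg(-1)$-component really defines a homomorphism $\pi$ on $\bar U$ and that $\bar U\cap K$ lies in its kernel --- both of which come down to the eigenvalue computation $\Ad(y(\lam))|_{\frg(i)}=(-1)^i$ together with the fact that $\bigoplus_{k\le -2}\frg(k)$ is an ideal of $\overline{\fru}{}$.
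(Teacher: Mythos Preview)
Your argument is correct and follows essentially the same line as the paper: both reduce to the identity $\bar U\cap P=\{e\}$ to pass from an equality in $\caP$ to an equality in $\bar U$. For the second assertion the paper simply says ``follows in a similar way,'' whereas you make explicit the grading projection $\pi:\bar U\to\frg(-1)$ needed to conclude $x=x'$; this is a correct and natural way to supply the detail the paper omits.
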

\begin{proof} Suppose $k \in \bar{U}\cap K$ and
$x = \sum x_j \in \frg(-1)$ such that 
\begin{equation}\label{inj}
k \cdot \text{exp}(x_1)\; \text{exp}(x_2)\cdots \text{exp}(x_{\ell}) \cdot \frp = \text{exp}(x_1) \; \text{exp}(x_2) \cdots\text{exp}(x_{\ell}) \cdot \frp.
\end{equation}
The stabilizer in $G$ of $\frp$ is $P$ and $\bar{U} \cap P= {1}.$   Thus \eqref{inj} implies
$$k \cdot \text{exp}(x_1)\; \text{exp}(x_2)\; \ldots  \text{exp}(x_{\ell}) = \text{exp}(x_1) \; \text{exp}(x_2)
 \ldots  \text{exp}(x_{\ell}),$$
from which we conclude that $ k = 1$, verifying the first assertion of the lemma. The second assertion follows in a similar way.
\end{proof}

\bigskip
\begin{prop}\label{p:gfinite}
In the setting of Theorem \ref{t:main}, let $\caO$ be an orbit of $L$ on $\frg(-1)$ and define $Q_\caO$ as in \eqref{e:QO}.  Then, 
\begin{equation}
\label{e:gfinite}
\mathrm{dim}(Q_\caO) = \mathrm{dim}(Q_{\{0\}}) +  \mathrm{dim}(\caO).
\end{equation}
\end{prop}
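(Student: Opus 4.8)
The plan is to reduce \eqref{e:gfinite} to a single dimension count for the isotropy group $Z_K(\epsilon(x))$, $x\in\caO$, and to prove the two resulting inequalities separately. Fix $x\in\caO$ and put $g=\exp(x_1)\cdots\exp(x_\ell)\in\overline{\fru}$-group $\overline{U}$, so that $\epsilon(x)=g\cdot\frp$ and $Z_G(\epsilon(x))=gPg^{-1}$. Since $\epsilon(0)=\frp$, the orbit $Q_{\{0\}}=K\cdot\frp$ is $K/(K\cap P)$ with $K\cap P=L\cdot(U\cap K)$, whence $\dim Q_{\{0\}}=\dim(\overline{U}\cap K)$; using also $\dim\frk=\dim L+2\dim(\overline{U}\cap K)$ and $\dim\caO=\dim L-\dim Z_L(x)$, the identity \eqref{e:gfinite} becomes equivalent to
\[
\dim Z_K(\epsilon(x))\;=\;\dim Z_L(x)+\dim(U\cap K).
\]

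For the inequality ``$\le$'' I would argue as in \cite{bt}: the morphism $[\overline{U}\cap K]\times\caO\to\caP$, $(k,y)\mapsto k\cdot\epsilon(y)$, is injective. Indeed, if $k\cdot\epsilon(y)=k'\cdot\epsilon(y')$ with $k,k'\in\overline{U}\cap K$, then $(k')^{-1}k\cdot\epsilon(y)=\epsilon(y')$ lies in $\bigl([\overline{U}\cap K]\cdot\epsilon(y)\bigr)\cap\epsilon(\frg(-1))=\{\epsilon(y)\}$ by the second assertion of Lemma~\ref{l:freeaction}; hence $\epsilon(y')=\epsilon(y)$, so $y=y'$ since $\epsilon$ is injective, and then $(k')^{-1}k=1$ by freeness of the $[\overline{U}\cap K]$-action. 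By Lemma~\ref{l:inj} the image of this map, namely $[\overline{U}\cap K]\cdot\epsilon(\caO)$, has dimension $\dim(\overline{U}\cap K)+\dim\caO$; as it sits inside the single orbit $Q_\caO$, we get $\dim Q_\caO\ge\dim(\overline{U}\cap K)+\dim\caO$, which is ``$\le$'' in the displayed identity.

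For ``$\ge$'' the key structural observation is that $\overline{P}$ and $gPg^{-1}$ are opposite parabolic subgroups: since $g\in\overline{U}\subseteq\overline{P}$, the automorphism $\Ad(g^{-1})$ preserves $\overline{\frp}$, so $\overline{\frp}+\Ad(g)\frp=\Ad(g)(\overline{\frp}+\frp)=\frg$; thus $gPg^{-1}$ is opposite to $\overline{P}$ with common Levi $gLg^{-1}$, and in particular $g^{-1}Lg\subseteq\overline{P}$ forces $L\cap gUg^{-1}=1$. Now $Z_L(x)=Z_L(\epsilon(x))$ by Lemma~\ref{l:inj}, so both $Z_L(x)$ and $K\cap gUg^{-1}$ are closed subgroups of $Z_K(\epsilon(x))$, and $Z_L(x)\cap gUg^{-1}\subseteq L\cap gUg^{-1}=1$; hence $\dim Z_K(\epsilon(x))\ge\dim Z_L(x)+\dim(K\cap gUg^{-1})$, and ``$\ge$'' reduces to
\[
\dim\bigl(K\cap gUg^{-1}\bigr)\;\ge\;\dim(U\cap K),\qquad\text{i.e.}\qquad\dim\bigl(\frk\cap\Ad(g)\fru\bigr)\;\ge\;\dim(\frk\cap\fru).
\]

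This last inequality is the step I expect to be the main obstacle. Because $\Ad(g)\fru$ is not $\theta$-stable once $x\neq0$, its intersection with $\frk=\bigoplus_i\frg(2i)$ cannot be computed from a $\theta$-root decomposition. The approach I would take is to exploit the explicit shape of $g$: by $[\overline{\fru},\overline{\fru}]\subseteq\bigoplus_{k\le-2}\frg(k)$ one has $g=\exp(x+z)$ with $z\in\bigoplus_{k\le-2}\frg(k)$ (as in the proof of Lemma~\ref{l:inj}), and since $\Ad(g)$ strictly lowers the grading, one can try to produce, recursively down the grading, for each homogeneous element of $\fru\cap\frk$ an element of $\frk\cap\Ad(g)\fru$ with the same leading term; injectivity of this ``leading term'' assignment — which is where the hypotheses \eqref{e:key0}--\eqref{e:key} on $\mathscr P$ should enter — then yields the inequality. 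Granting it, combining with the ``$\le$'' bound above forces equality throughout, which proves the displayed identity and hence \eqref{e:gfinite}.
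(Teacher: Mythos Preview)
Your lower bound (``$\le$'' in your displayed identity, i.e.\ $\dim Q_\caO\ge\dim Q_{\{0\}}+\dim\caO$) is fine and is exactly the paper's argument via Lemmas~\ref{l:inj} and~\ref{l:freeaction}.

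The upper bound, however, does not go through: the inequality you reduce to,
\[
\dim\bigl(\frk\cap\Ad(g)\fru\bigr)\;\ge\;\dim(\frk\cap\fru),
\]
is \emph{false} already in the simplest case covered by Example~\ref{ex:rho-check}. Take $G=\GL(3,\bbC)$, $\lambda=\rho^\vee=(1,0,-1)$, and $x=E_{21}+E_{32}\in\frg(-1)$, so $g=\exp(E_{21})\exp(E_{32})=I+E_{21}+E_{32}$. Here $\frk\cap\fru=\bbC E_{13}$ has dimension~$1$, but a direct computation of the $\frs$-components of $\Ad(g)E_{12},\Ad(g)E_{23},\Ad(g)E_{13}$ shows they are linearly independent in $\frs$, so $\frk\cap\Ad(g)\fru=0$. (Consistently, $\dim Z_K(\epsilon(x))=2$ as the proposition predicts, but your subgroups $Z_L(x)$ and $K\cap gUg^{-1}$ together only account for dimension $1+0=1$.) Your structural observation that $\bar P$ and $gPg^{-1}$ are opposite is correct, but the Levi of $gPg^{-1}$ is $gLg^{-1}$, not $L$; elements of $Z_K(\epsilon(x))=K\cap gPg^{-1}$ need not factor as a product of something in $K\cap gLg^{-1}$ and something in $K\cap gUg^{-1}$, so the decomposition you use misses part of the stabilizer. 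The recursive ``leading term'' scheme you sketch cannot rescue this, since the hypotheses \eqref{e:key0}--\eqref{e:key} hold in the counterexample.

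The paper's route is genuinely different: rather than analyzing $Z_K(\epsilon(x))$ directly, it factors $\epsilon=\tau\circ\iota$ through the iterated bundle
\[
\caX=K\times_{K\cap P}P_1\times_P\cdots\times_P P_\ell,
\]
which fibers $K$-equivariantly over $K/(K\cap P)$ with fibers of dimension $\sum_i\dim(P_i/P)=\dim\frg(-1)$ (by \eqref{e:key}). The single $K$-orbit $K\cdot\iota(\caO)$ then has dimension at most $\dim(K/(K\cap P))+\dim\caO$, and since $Q_\caO=\tau(K\cdot\iota(\caO))$ one gets the upper bound without ever needing to locate a large explicit subgroup of $Z_K(\epsilon(x))$.
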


\begin{proof} 
We first show that
\begin{equation}
\label{e:dim1}
\dim(Q_{\caO})\leq \dim(Q_{\{0\}}) + \text{dim}(\caO) .
\end{equation}
To see this, we factor $\epsilon$ as follows.  
Write $P_i$ for the centralizer in $G$ of $\frp_i$.
Define
\begin{equation}
\label{e:X}
\caX  = K\underset{K\cap P}{\times}P_{1}\underset{ P}{\times}P_{2}\underset{ P}{\times}\ldots \underset{ P}{\times}P_{\ell}
\end{equation}
to be the quotient of  $K\times P_{1}\times \ldots \times P_{\ell}$ by the action
\[ (p_0, p_{1}, \ldots, p_{\ell}) \cdot  (k_0, y_{1},  y_{{2}}, \ldots , y_{\ell}) =
(k_0 \;p_0, p_0^{-1}\; y_{1} \;p_{1},   p^{-1}_{1}  \;  y_{{2}} \;p_{2} , \ldots, p_{\ell}^{-1}\; y_{\ell}).
\]
Let $K$ act on $\caX$ via 
\[ k \cdot [k_0, y_{1},  y_{{2}}, \ldots , y_{\ell}] =   [ k \;k_0, y_{1},  y_{{2}}, \ldots , y_{\ell}].\]
Then $\caX$ comes equipped with a natural $K$-equivariant map
\begin{equation}
\label{e:tau}
\tau \; : \; \caX \lra \caP
\end{equation}
mapping
\[
[k_0, y_{1},  y_{{2}}, \ldots , y_{\ell}] \mapsto k_0y_1\cdots y_\ell \cdot \frp.
\]
Define an $L$ equivariant map
\[
\iota \; : \; \frg_{-1}  \longrightarrow \caX 
\]
mapping $x = x_1 + \cdots + x_\ell$ as 
\[
\iota(x) = [1, \text{exp}(x_1), \text{exp}(x_2), \ldots, \text{exp}(x_{\ell})].
\]
Then, by definition, $\epsilon = \tau \circ \iota$ and
\[
Q_\caO = \tau(K\cdot \iota(\caO)).
\]
Thus
\[
\dim(Q_\caO) \leq \dim(K\cdot \iota(\caO))\leq \dim(K/K\cap P) + \dim(\caO).
\]
Once we note that $Q_{\{0\}} = K\cdot \frp \simeq K/K\cap P$, \eqref{e:dim1}
follows.

We argue that the converse inequality holds. 
Since $\bar{U}\cap K \cdot \epsilon(\caO_{})$ is contained in $Q_{\caO},$
 Lemma \ref{l:freeaction} implies  
\begin{equation}
\label{e:step1}
\dim(Q_{\caO}) \geq \dim(\bar{U}\cap K) + \dim(\epsilon(\caO)).
\end{equation}
By Lemma \ref{l:inj}, we know that  $\dim(\epsilon(\caO)) = \dim(\caO).$  Since
 $Q_{\{0\}} = K \cdot \frp  \simeq K/(K \cap P)$,  
\[
\dim(Q_{\{0\}})  = \dim(\bar{U}\cap K).
\]
Thus, \eqref{e:step1} becomes
\[
\dim(Q_\caO) \geq \dim(Q_{\{0\}}) +  \dim(\caO),
\]
as we wished to show.
\end{proof}
\medskip

\begin{corollary}\label{c:opendense}
$[K\cap \bar{P}] \cdot \epsilon (x)$ is open and dense in $K \cdot  \epsilon (x)$. 
\end{corollary}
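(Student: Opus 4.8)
The plan is to compare dimensions: I will show that $[K\cap\bar P]\cdot\epsilon(x)$ has the same dimension as the ambient $K$-orbit $Q_{\caO}=K\cdot\epsilon(x)$, where $\caO=L\cdot x$. This suffices, because an orbit of an algebraic group is always locally closed, $Q_\caO$ is irreducible (since $K$ is connected), and a locally closed subset of an irreducible variety that has the full ambient dimension is automatically open and dense. Now Proposition~\ref{p:gfinite}, together with the identification $Q_{\{0\}}=K\cdot\frp\simeq K/(K\cap P)$ used in its proof, gives $\dim(Q_\caO)=\dim(\bar U\cap K)+\dim(\caO)$; and the inclusion $[K\cap\bar P]\cdot\epsilon(x)\subseteq Q_\caO$ already yields $\dim\bigl([K\cap\bar P]\cdot\epsilon(x)\bigr)\leq\dim(Q_\caO)$. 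So only the reverse inequality remains.

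For the lower bound I would introduce the morphism
\[
\mu\colon (\bar U\cap K)\times\caO\longrightarrow\caP,\qquad \mu(u,x')=u\cdot\epsilon(x').
\]
Since $\epsilon$ is $L$-equivariant, writing $x'=l\cdot x$ with $l\in L$ gives $\epsilon(x')=l\cdot\epsilon(x)$, so $\mu(u,x')=(ul)\cdot\epsilon(x)$ with $u\in\bar U\cap K\subseteq K\cap\bar P$ and $l\in L\subseteq K\cap\bar P$; hence $\im(\mu)\subseteq[K\cap\bar P]\cdot\epsilon(x)$. The crucial point is that $\mu$ is injective. Indeed, if $u\cdot\epsilon(x')=u''\cdot\epsilon(x'')$, then $\epsilon(x'')=(u'')^{-1}u\cdot\epsilon(x')$ lies in $\bigl([\bar U\cap K]\cdot\epsilon(x')\bigr)\cap\epsilon(\frg(-1))$, which by the second assertion of Lemma~\ref{l:freeaction} equals $\{\epsilon(x')\}$; thus $\epsilon(x')=\epsilon(x'')$, hence $x'=x''$ by the injectivity of $\epsilon$ recorded in Lemma~\ref{l:freeaction}; and then $(u'')^{-1}u$ fixes $\epsilon(x')$, so $u=u''$ by the freeness assertion of the same lemma. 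Since $\mu$ is injective, $\dim\im(\mu)=\dim(\bar U\cap K)+\dim(\caO)=\dim(Q_\caO)$, which forces $\dim\bigl([K\cap\bar P]\cdot\epsilon(x)\bigr)=\dim(Q_\caO)$ and completes the argument.

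I expect the only genuinely delicate point to be the injectivity of $\mu$: this is exactly where all three conclusions of Lemma~\ref{l:freeaction} — freeness of the $[\bar U\cap K]$-action, the transversality identity $\bigl([\bar U\cap K]\cdot\epsilon(x')\bigr)\cap\epsilon(\frg(-1))=\{\epsilon(x')\}$, and the injectivity of $\epsilon$ — enter simultaneously; everything else is dimension bookkeeping resting on Proposition~\ref{p:gfinite} (which itself already incorporates the identity $Z_L(x)=Z_L(\epsilon(x))$ of Lemma~\ref{l:inj}, so no separate appeal to it is needed). The same argument should apply verbatim with $K$ replaced by the possibly disconnected $K'=G^\theta$, provided the orbit in question is irreducible (cf.\ Remark~\ref{r:disconnected}), and with $K$ replaced by $P$, yielding the open-dense statement used in Remark~\ref{r:O}.
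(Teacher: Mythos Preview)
Your proposal is correct and takes essentially the same approach as the paper's proof. The paper's argument is terse---it simply cites Lemma~\ref{l:freeaction} and the dimension count of Proposition~\ref{p:gfinite} (together with $\dim(Q_{\{0\}})=\dim(\bar U\cap K)$)---but unpacking it yields exactly your injective map $\mu$: the lower bound in the proof of Proposition~\ref{p:gfinite} already establishes $\dim\bigl([\bar U\cap K]\cdot\epsilon(\caO)\bigr)=\dim(\bar U\cap K)+\dim(\caO)$ via Lemma~\ref{l:freeaction}, and your construction just makes that step explicit.
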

\begin{proof}
Since $\dim(Q_{\{0\}}) = \dim(K/(K\cap P))=\dim(\bar U \cap K)$,
Lemma \ref{l:freeaction} and the dimension count of Proposition \ref{p:gfinite} implies the result.
\end{proof}

\medskip

\begin{remark}
\label{r:opendenseO}
We can repeat the analysis above with $K$ replace by $P$ (as in Remark \ref{r:O}).  In this
case $Q_{\{0\}}$ is just the point $\frp$ in $\caP$.  Replacing $K$ with $P$ in the proofs of
Proposition \ref{p:gfinite} and Corollary \ref{c:opendense} implies that
\[
\dim(Q_\caO) = \dim(\caO),
\]
and $\epsilon({\caO})$ is open and dense in ${Q_\caO}$.\qed
\end{remark}

\subsection{Consequences of Corollary \ref{c:opendense}.}
\label{s:opendense} 
Recall that the component group of the centralizer in $K$ of an element of $\caP$ is an elementary abelian 2-group.  Thus, every irreducible $K$-equivariant local system on $\caP$ is one-dimensional.  In the setting of Corollary \ref{c:opendense} and notation for $Y$ in \eqref{e:Y}, we thus obtain a restriction map,
\begin{equation}
\label{e:phi1}
\varphi_1 \; : \; \Xi(K,Y) \lra \Xi \left (K\cap \bar{P}, [K\cap \bar P] \cdot \epsilon(\frg(-1)) \right ).
\end{equation}
This is once again dual to the corresponding restriction of $A$-group representations as
in \eqref{e:A}.  By definition, if $\gamma \in \Xi(K,Y)$, then the constructible sheaf $\con(\gamma)$ restricts to $\con(\varphi_1(\gamma))$.
By the density statement, the intersection homology polynomials for $K \cap \bar P$ orbit closures on $[K\cap \bar P] \cdot \epsilon(\frg(-1))$ match those for $K$ orbits on $Y$. (This is part of the unicity of perverse extensions.  A discussion of such a statement can be found in the proof of Proposition 7.14(c) and around Equation (7.16)(e) in \cite{abv}.) More
precisely, for $\psi, \gamma \in \Xi(K,Y)$,
\[
P_{\psi,\gamma} = P_{\varphi_1(\psi), \varphi_1(\gamma)};
\]
in particular,
\[
C^g_{\psi,\gamma} = C^g_{\varphi_1(\psi), \varphi_1(\gamma)}.
\]
Finally, using the method of calculating of characteristic cycles via normal slices (sketched, for example, at the bottom of page 186 and top of page 187 in \cite{abv}), one sees that the microlocal multiplicities match: if $\caO = K\cdot \epsilon(x)$, $\caO'=[K\cap \bar P]\cdot \epsilon(x)$ and $\gamma \in \Xi(K,Y)$,
\[
m_{\caO'}(\per(\gamma)) = m_{\caO}(\per(\varphi_1(\gamma)).
\]

\subsection{Induced bundles}
\label{s:ib}
The previous section relates the geometry of orbits of $K$ on $Y$ to the orbits of $K\cap \bar P$ on $[K\cap \bar P] \cdot \epsilon(\frg(-1))$.  In this section, we use an induced bundle construction to relate these $K \cap \bar P$ orbits to $L$ orbits on $\frg(-1)$.

To begin, recall a general construction.  Suppose $H$ acts on a variety $X$ with finitely many orbits.  Suppose $H \subset H'$.  The induced bundle
\[
H' \times_H X
\]
is defined by quotienting $H' \times X$ by $(h'h,x) \sim(h',hx)$ for all $h \in H$.  See \cite[Chapter 7]{abv}, for example.

\label{sec:ib}
\begin{prop}\label{p:ib}
In the setting of Section \ref{s:orbits}, the map
\[
\IB : [K  \cap \bar P] \times_{L} \frg(-1) \lra [K \cap \bar P] \cdot \epsilon (\frg(-1))
\]
defined by
\[
\IB(k, x ) \mapsto k \;\epsilon (x)
\]
is a $K \cap \bar P$ equivariant isomorphism.
\end{prop}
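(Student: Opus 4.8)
The plan is to reduce the statement to showing that a single concrete multiplication map is an isomorphism of varieties, and then to invert that map explicitly using the big cell in $\caP$.

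First I would record the formal points. The map $\IB$ is well defined on the quotient $[K\cap\bar P]\times_L\frg(-1)$ because $L\subseteq K\cap\bar P$ and $\epsilon$ is $L$-equivariant: for $l\in L$ one has $\IB(kl,l^{-1}x)=kl\cdot\epsilon(l^{-1}x)=k\cdot\epsilon(x)$. It is manifestly $K\cap\bar P$-equivariant and, by construction, surjective onto $[K\cap\bar P]\cdot\epsilon(\frg(-1))$.

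Next I would pass to a concrete model. Since $\bar P=L\bar U$ is $\theta$-stable with $\theta$-stable Levi factor $L\subseteq K$, decomposing $g=l\bar u$ along $\bar P=L\bar U$ and applying $\theta$ gives $K\cap\bar P=L\ltimes(\bar U\cap K)$, where $\bar U\cap K$ is the unipotent group with Lie algebra $\overline{\fru}\cap\frk$. Hence $\bar U\cap K$ is a set of coset representatives for $L$ in $K\cap\bar P$, so $(u,x)\mapsto[u,x]$ is an isomorphism $(\bar U\cap K)\times\frg(-1)\xrightarrow{\ \sim\ }[K\cap\bar P]\times_L\frg(-1)$, under which $\IB$ becomes $\mu\colon(\bar U\cap K)\times\frg(-1)\to T$, $\mu(u,x)=u\cdot\epsilon(x)$, where $T:=(\bar U\cap K)\cdot\epsilon(\frg(-1))$; since $L$ normalizes $\bar U\cap K$ and preserves $\frg(-1)$, one has $T=[K\cap\bar P]\cdot\epsilon(\frg(-1))$. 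So it suffices to show $\mu$ is an isomorphism. That $\mu$ is bijective is exactly Lemma \ref{l:freeaction}: surjectivity is clear, and if $u_1\epsilon(x_1)=u_2\epsilon(x_2)$ then $u_2^{-1}u_1\cdot\epsilon(x_1)=\epsilon(x_2)\in\epsilon(\frg(-1))$, so the second assertion of that lemma forces $\epsilon(x_1)=\epsilon(x_2)$, whence $x_1=x_2$ by injectivity of $\epsilon$ and then $u_1=u_2$ by the freeness of the $\bar U\cap K$-action.

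The substance of the proof — and the only genuine obstacle, since a bijective morphism of varieties need not be an isomorphism — is to write down an inverse morphism. I would use the big cell $\caP^\circ:=\bar U\cdot\frp$, on which $g\mapsto g\cdot\frp$ is an isomorphism $\bar U\xrightarrow{\ \sim\ }\caP^\circ$. Each $x_i\in\frl_i\cap\overline{\fru}\subseteq\overline{\fru}$, so $E(x):=\exp(x_1)\cdots\exp(x_\ell)\in\bar U$ and $E(x)\cdot\frp=\epsilon(x)$; thus $\epsilon(\frg(-1))$, and hence $T$, lie in $\caP^\circ$, and via $\caP^\circ\cong\bar U$ the map $\mu$ becomes $(u,x)\mapsto u\cdot E(x)$ (product in $\bar U$). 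As in the proof of Lemma \ref{l:inj}, $\log E(x)=x+z(x)$ with $z(x)\in\bigoplus_{k\le-2}\frg(k)$, because the Baker--Campbell--Hausdorff corrections to $x_1+\cdots+x_\ell$ are iterated brackets of elements of $\overline{\fru}=\bigoplus_{k\le-1}\frg(k)$ and hence lie in $\bigoplus_{k\le-2}\frg(k)$. Now fix $g\in T$ and write $g=u\,E(x)$ with $u\in\bar U\cap K$. Since $\log u\in\overline{\fru}\cap\frk\subseteq\bigoplus_{k\le-2}\frg(k)$ and every Baker--Campbell--Hausdorff bracket correction to $\log(u\,E(x))$ again lies in $\bigoplus_{k\le-2}\frg(k)$, projecting $\log g$ onto $\frg(-1)$ recovers $x$ exactly: $x=\pi_{-1}(\log g)$. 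Consequently $g\mapsto\bigl(g\,E(\pi_{-1}(\log g))^{-1},\ \pi_{-1}(\log g)\bigr)$ — a morphism $T\to\bar U\times\frg(-1)$, each of $g\mapsto\log g$, the linear projection $\pi_{-1}$, and $x\mapsto E(x)$ being morphisms — has image in $(\bar U\cap K)\times\frg(-1)$ and is inverse to $\mu$. Hence $\mu$, and therefore $\IB$, is an isomorphism. (Alternatively, having shown $\mu$ bijective, one could use the same big-cell picture to identify $T$ with a graph-type, hence smooth, subvariety of $\bar U$ and invoke Zariski's main theorem; the explicit inverse is more direct.)
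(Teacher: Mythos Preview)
Your proof is correct and, for the bijectivity portion, follows essentially the same route as the paper: both reduce to the Levi decomposition $K\cap\bar P=(\bar U\cap K)\cdot L$ and then invoke Lemma~\ref{l:freeaction} together with the injectivity of $\epsilon$ to conclude that $(u,x)\mapsto u\cdot\epsilon(x)$ is a bijection. The paper's proof stops there---it shows surjectivity, equivariance, and injectivity, and nothing more.

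Where you go further is in taking seriously the word ``isomorphism'': you point out that a bijective morphism of varieties need not be an isomorphism, and you construct an explicit algebraic inverse using the big cell $\bar U\cdot\frp\cong\bar U$. The key observation---that $\log u\in\bar\fru\cap\frk\subseteq\bigoplus_{k\le-2}\frg(k)$ and all Baker--Campbell--Hausdorff corrections in $\log(uE(x))$ likewise lie in degree $\le-2$, so that $\pi_{-1}(\log g)$ recovers $x$ as a morphism---is a genuine addition not present in the paper. This also shows, as a byproduct, that the target $[K\cap\bar P]\cdot\epsilon(\frg(-1))$ is a smooth locally closed subvariety of $\caP$, which the paper does not otherwise establish. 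The paper presumably has in mind that over $\bbC$ a bijective morphism between smooth varieties of the same dimension is an isomorphism, but this requires knowing the target is a variety and is normal; your explicit inverse sidesteps that issue entirely.
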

\begin{proof}
Clearly $\IB$ is surjective and $K \cap \bar P$ equivariant.  We prove that $\IB$ is injective. Suppose $\IB(k_1, x_1) = \IB(k_2,x_2) .$ 
Write $k_1 = \bar{n}_1\;  t_1$ with $\bar{n}_1\in [\bar{U}\cap K] $ and $t_1 \in L \cap K.$
Similarly, write  $k_2 = \bar{n}_2\;  t_2.$
Thus
\[
\bar n_1 t_1 \epsilon(x_1) = \bar n_2 t_2 \epsilon(x_2).
\]
Since $\eps$ is $L$ equivariant, this implies
\begin{equation}\label{a}
\bar n_1 \epsilon(\Ad(t_1)x_1) = \bar n_2 \epsilon(\Ad(t_2)x_2).
\end{equation}
By Lemma \ref{l:freeaction},  $\bar n_1 = \bar n_2$ and
\begin{equation}
\label{e:eps-eq}
 \epsilon(\Ad(t_1)x_1) = \epsilon(\Ad(t_2)x_2).
\end{equation}
Since $\eps$ is injective (Lemma \ref{l:freeaction}), $\Ad(t_1) x_1 = \Ad(t_2) x_2$.  We use this in the third equality below to conclude,
\begin{align*}
(k_2, x_2) &=
 (\bar n_2 t_2,x_2) =  (\bar n_2t_2,\Ad(t_2)^{-1}\Ad(t_2)x_2) = (\bar n_2 t_2, \Ad(t_2)^{-1}\Ad(t_1)x_1) \\
 &\sim(\bar n_2, \Ad(t_1)x_1) = (\bar n_1, \Ad(t_1)x_1)\\
&=(k_1t_1^{-1},\Ad(t_1)x_1) \sim (k_1,x_1).
\end{align*}
Thus $\IB(k_1,x_1) = \IB(k_2,x_2)$ implies $(k_1, x_1) \sim (k_2,x_2)$, and so $\IB$ is injective as we wished to show.
\end{proof}

\medskip

\begin{cor}
\label{c:ib-matching}
In the setting of Proposition \ref{p:ib}, there is a 
is a natural correspondence of
$L$ orbits on $\frg(-1)$ and $K\cap \bar P$ orbits on $[K\cap \bar P]\cdot\epsilon(\frg(-1))$,
\[
\caO = L\cdot x \mapsto \caO' = [K \cap \bar P] \cdot \epsilon(x).
\]
Write $A_L(x)$ for the component group of the 
centralizer of $x$ in $L$, and similarly for $A_{K \cap \bar P}(\epsilon(x))$.  Then the map
$L \rightarrow [K \cap \bar P]$ induces
an isomorphism
\[
A_L(x) \simeq A_{K\cap \bar P}(\epsilon(x)).
\]
The resulting bijection
\begin{equation}
\label{e:phi2}
\varphi_2 \; : \; \Xi(K\cap \bar P, [K\cap \bar P]\cdot\epsilon(\frg(-1))) \rightarrow \Xi(L,\frg(-1))  
\end{equation}
implements an identification of the geometric mulitplicity matrices and intersection homology polynomials of Section \ref{s:cgp}.  
More precisely, for $\psi,\gamma$ in $\Xi(K\cap \bar P, [K\cap \bar P]\cdot\epsilon(\frg(-1)))$
\begin{equation}
\label{e:ib-matchingP}
P_{\psi,\gamma} = P_{\varphi_2(\psi),\varphi_2(\gamma)} \; 
\end{equation}
and, in particular,
\begin{equation}
\label{e:ib-matchingC}
C^g_{\psi,\gamma} = C^g_{\varphi_2(\psi),\varphi_2(\gamma)}.
\end{equation}
Finally, the microlocal multiplicities match,
\begin{equation}
\label{e:ib-matching-mic}
m_{\caO'}(\per(\gamma)) = m_{\caO}(\per(\varphi_2(\gamma)).
\end{equation}
\end{cor}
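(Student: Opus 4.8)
The plan is to realize the bijection $\varphi_2$ as a composite of two equivalences: the tautological one induced by the isomorphism $\IB$ of Proposition~\ref{p:ib}, and the standard equivalence of equivariant (derived/perverse) categories attached to an induced bundle. For the latter the compatibility with Kazhdan--Lusztig data and with characteristic cycles is precisely the package developed in \cite[Chapter~7]{abv}, which has already been used in Section~\ref{s:opendense} to establish the corresponding properties of $\varphi_1$. So the proof is essentially a matter of identifying the pieces and then quoting that functoriality, mirroring the $\varphi_1$ discussion throughout.

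First I would record the orbit correspondence and the component-group identification. Since $\IB$ is a $[K\cap\bar P]$-equivariant isomorphism, it suffices to analyze $Z := [K\cap\bar P]\times_{L}\frg(-1)$. Projection to the first factor exhibits $Z$ as a bundle over $[K\cap\bar P]/L$ with fiber $\frg(-1)$; because $[K\cap\bar P]$ acts transitively on the base, its orbits on $Z$ are in bijection with the $L$-orbits on the fiber, via $\caO = L\cdot x \mapsto [K\cap\bar P]\cdot[1,x]$, and $\IB$ carries the latter to $\caO' = [K\cap\bar P]\cdot\epsilon(x)$. Unwinding the equivalence relation defining $Z$ shows the stabilizer of $[1,x]$ in $[K\cap\bar P]$ is exactly $Z_L(x)$; here one uses $L\subset K\cap\bar P$, valid since $\bar P = L\bar U$ and $L\subset K$. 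Hence $\IB$ identifies $Z_L(x)$ with $Z_{K\cap\bar P}(\epsilon(x))$ and induces the asserted isomorphism $A_L(x)\simeq A_{K\cap\bar P}(\epsilon(x))$, so that $\varphi_2$ of \eqref{e:phi2} is the resulting bijection on pairs, matching an $L$-equivariant local system on $\caO$ with the $[K\cap\bar P]$-equivariant local system on $\caO'$ carrying the same $A$-group parameter.

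Next I would invoke the induced-bundle equivalence. For $L\subset K\cap\bar P$, the diagram $\frg(-1) \longleftarrow [K\cap\bar P]\times\frg(-1) \longrightarrow Z$, whose left arrow is projection and whose right arrow is the quotient by the free $L$-action, yields an equivalence between $\caP(L,\frg(-1))$ and $\caP(K\cap\bar P, Z)$, and likewise for the constructible categories, which up to a cohomological shift by the constant $d := \dim(K\cap\bar P)-\dim L = \dim(\bar U\cap K)$ is $t$-exact for the perverse $t$-structures, matches irreducible objects compatibly with the parametrization by $\Xi$ and with the local-system correspondence above (hence with $\varphi_2$), commutes with taking cohomology sheaves, and is compatible with Euler characteristics and Grothendieck groups. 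Composing with the isomorphism $\IB$ gives the corresponding comparison between $\caP(K\cap\bar P,[K\cap\bar P]\cdot\epsilon(\frg(-1)))$ and $\caP(L,\frg(-1))$. Since the shift by $d$ affects the $\per$-objects and the $\con$-objects uniformly and $P_{\psi,\gamma}$ is read off from cohomology sheaves near the stratum $Q_\psi$, one gets \eqref{e:ib-matchingP} directly, and \eqref{e:ib-matchingC} by specializing at $q=1$ (equivalently, from the Euler-characteristic compatibility). This is the same mechanism used for $\varphi_1$ in Section~\ref{s:opendense}, with the open immersion there replaced by the smooth bundle projection here; the relevant unicity-of-perverse-extension statements from \cite[Chapter~7]{abv} apply verbatim.

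The microlocal identity \eqref{e:ib-matching-mic} is the step I expect to require the most care, though it too parallels the $\varphi_1$ argument. I would compute both sides by the normal-slice description of characteristic-cycle multiplicities (as sketched in \cite[pp.~186--187]{abv}): near the point $\epsilon(x)$ the space $[K\cap\bar P]\cdot\epsilon(\frg(-1))\cong Z$ is, in suitable coordinates, a product of a normal slice to $\caO$ in $\frg(-1)$ with an open subset of $[K\cap\bar P]/L$, with $\caO'$ corresponding to $\caO$ times that open set. Thus a normal slice to $\caO'$ at $\epsilon(x)$ is a translate of a normal slice to $\caO$ at $x$, and the restriction of $\per(\gamma)$ to the former equals the restriction of $\per(\varphi_2(\gamma))$ to the latter, pulled back along a projection which does not change the local Euler characteristic at the base point. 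Hence $m_{\caO'}(\per(\gamma)) = m_{\caO}(\per(\varphi_2(\gamma)))$, completing the proof. The one thing I would want to check carefully throughout is the bookkeeping of the constant shift $d$ together with the assertion that the induced-bundle equivalence is genuinely compatible with characteristic cycles; both are standard, but I would cite \cite[Chapter~7]{abv} precisely and align the argument with the $\varphi_1$ case already treated.
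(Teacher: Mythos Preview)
Your proposal is correct and follows essentially the same approach as the paper: invoke the induced-bundle equivalence of \cite[Chapter~7]{abv} (the paper cites Proposition~7.14 specifically) and then compose with the isomorphism $\IB$ of Proposition~\ref{p:ib}. The only noteworthy difference is that for the microlocal identity \eqref{e:ib-matching-mic} the paper appeals directly to \cite[Proposition~20.1(e)]{abv}, whereas you rederive it via the normal-slice description; both are valid, and your version makes the parallel with the $\varphi_1$ argument in Section~\ref{s:opendense} more visible.
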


\begin{proof}
According to \cite[Proposition 7.14]{abv}, 
there is a bijective correspondence of $L$ orbits on $\frg(-1)$ and $K\cap\bar P$ orbits
on the induced bundle $ [K  \cap \bar P] \times_{L} \frg(-1) $ with properties as listed in \eqref{e:phi2}-\eqref{e:ib-matchingC}, while \cite[Proposition 20.1(e)]{abv} implies \eqref{e:ib-matching-mic}. Composing with the isomorphism of Proposition \ref{p:ib} completes the proof.
\end{proof}

\medskip

\subsection{Proof of Theorem \ref{t:main}}
\label{s:mainproof}
Once one observes that $\varphi$ in Theorem \ref{t:main} is simply the composition of $\varphi_2$ in \eqref{e:phi2} and
$\varphi_1$ in \eqref{e:phi1},
 the theorem follows by combining the results of Section \ref{s:opendense} and Corollary \ref{c:ib-matching}.\qed

\medskip
\begin{remark}
\label{ex:classical2}
Determining when $\varphi$ is surjective requires case-by-case analysis, and we simply record the results here for classical groups.    If $G=\GL(n)$, then all local systems are trivial, so $\varphi$
is automatically surjective for any ordering of the set $\mathscr P$ defined in Example \ref{ex:classical}. As \cite[Example 3.5]{bt} already indicates, for some choices of ordering of $\mathscr P$, $\varphi$ can fail to be surjective.  But there is always some choice for which $\varphi$ {\em is} surjective.  If $G = \Sp(2n)$, and $\lambda$ consists of all integers, there are again no nontrivial local systems, so surjectivity is automatic.  If, however, $\lambda$ consists of half-integers, $\mathscr P$ in Example \ref{ex:classical} sometimes contains a distinguished element $\frp_\circ$.  This element must appear first in the ordering on $\mathscr P$ in order for $\varphi$ to be surjective.  The situation is similar for $\SO(n)$ where there is sometimes a distinguished element in the set $\mathscr P$ defined in Example \ref{ex:classical} which has a Levi factor component of the form $\frs \fro(n)$.  Again, this element must be taken first in the order on $\mathscr P$ in order for $\varphi$ to be surjective. \qed \end{remark}

\section{abelian and two-step case}
\label{s:twostep}

In this section, we study a special class of examples that includes some cases (like Example \ref{ex:g2}) where there does not exist a set 
$\mathscr P$ satisfying \eqref{e:key0} and \eqref{e:key}. In these cases, we do not need to truncate
the exponential map.  Alternatively, one can think of this section as a kind of basic case, and the definition of 
$\eps$ in Definition \ref{d:eps} and some of the proofs of Section \ref{s:proof} as an induction using the bundle constructed in \eqref{e:X}.  This can be made
precise, but isn't necessary for our purposes here.

Assume that the $i$-eigenspace $\frg(i)$ is zero if $|i| >2$.  In other words, the nilradical
of $\frp$ is either abelian or a two-step nilpotent Lie algebra. Under this hypothesis,
we simply define
\begin{equation}
\label{emb}
\epsilon' :  \frg(-1) \longrightarrow   \caP
\end{equation}
by
\[
\epsilon'(x) = \exp(x)\cdot \frp.
\]
Using $\epsilon'$ instead of $\epsilon$ in \eqref{e:QO}, for an $L$ orbits $\caO$
on $\frg(-1)$ we define
\[
Q'_\caO = K \cdot \epsilon'(\caO),
\]
and $Y'$ to be the union of the various $Q'_\caO$.
As in \eqref{e:A}, for $x \in \frg(-1)$ we have a natural map
\begin{equation}
\label{e:A2}
A_K(\epsilon'(x))^{\widehat{\phantom{x}}} \lra A_L(x)^{\widehat{\phantom{x}}},
\end{equation}
and hence a map
\begin{equation}
\label{e:phi2step}
\varphi'  \; : \; \Xi(K,Y') \lra \Xi(L,\frg(-1)).
\end{equation}
We then have the following analog of Theorem \ref{t:main}.

\begin{theorem}
\label{t:main2}
For 
$\psi,\gamma \in \Xi(K,Y')$,
\begin{equation}
\label{e:mainp2}
P_{\psi,\gamma} = P_{\varphi'(\psi), \varphi'(\gamma)}.
\end{equation}
Fix $\caO = L \cdot x$, $Q'_\caO = K\cdot \epsilon'(x)$, and $\gamma \in \Xi(K,Y')$.  Then
\[
m^\mic_{\caO}(\per(\varphi'(\gamma))) = m^\mic_{Q'_\caO}(\per(\gamma)).
\]
\end{theorem}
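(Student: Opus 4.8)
The plan is to mirror the proof of Theorem \ref{t:main} in the abelian/two-step setting, the key simplification being that under the hypothesis $\frg(i)=0$ for $|i|>2$, the truncated exponential is unnecessary and $\epsilon'(x)=\exp(x)\cdot\frp$ is already well-behaved. First I would establish the analog of Lemma \ref{l:inj}, namely that $\dim(\epsilon'(\caO))=\dim(\caO)$. The argument is actually easier here: since $[\frg(-1),\frg(-1)]\subset\frg(-2)$ and $\frg(-3)=0$, for $x\in\frg(-1)$ one has $\exp(x)\in\exp(\frg(-1)\oplus\frg(-2))$ genuinely (no higher terms), and the same grading argument as in Lemma \ref{l:inj} shows $Z_L(\epsilon'(x))=Z_L(x)$; in fact since $L$ preserves the grading and $x$ is the $\frg(-1)$-component of $\log(\epsilon'(x))$, an element of $L$ centralizing $\epsilon'(x)$ centralizes $x$.

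Next I would prove the analogs of Lemma \ref{l:freeaction}, Proposition \ref{p:gfinite}, and Corollary \ref{c:opendense} with $\epsilon$ replaced by $\epsilon'$. The free action of $\bar U\cap K$ on $[\bar U\cap K]\cdot\epsilon'(\frg(-1))$ and the injectivity of $\epsilon'$ follow exactly as in Lemma \ref{l:freeaction}, using only that $\bar U\cap P=\{1\}$ and that $\epsilon'$ takes values in $\bar U\cdot\frp$ (here $\exp(\frg(-1)\oplus\frg(-2))\subset\bar U$). For the dimension count one upper-bounds $\dim(Q'_\caO)$ by noting $Q'_\caO=K\cdot\epsilon'(\caO)$ and $\epsilon'$ is $L$-equivariant with $\dim(\epsilon'(\caO))=\dim(\caO)$, so $\dim(Q'_\caO)\le\dim(K/K\cap P)+\dim(\caO)=\dim(Q'_{\{0\}})+\dim(\caO)$; the reverse inequality follows from the free action of $\bar U\cap K$ as in Proposition \ref{p:gfinite}, since $\dim(\bar U\cap K)=\dim(K/K\cap P)=\dim(Q'_{\{0\}})$. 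Equality then gives, as in Corollary \ref{c:opendense}, that $[K\cap\bar P]\cdot\epsilon'(x)$ is open and dense in $K\cdot\epsilon'(x)=Q'_\caO$.

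With the density statement in hand, the rest of the argument is a verbatim copy of Sections \ref{s:opendense}--\ref{s:ib}. The open-dense embedding gives a restriction map $\varphi_1':\Xi(K,Y')\to\Xi(K\cap\bar P,[K\cap\bar P]\cdot\epsilon'(\frg(-1)))$ matching intersection homology polynomials and microlocal multiplicities (unicity of perverse extensions, plus the normal-slice computation of characteristic cycles, exactly as cited from \cite{abv}). Then the induced-bundle isomorphism $[K\cap\bar P]\times_L\frg(-1)\simeq[K\cap\bar P]\cdot\epsilon'(\frg(-1))$—proved just as in Proposition \ref{p:ib}, using $L$-equivariance and injectivity of $\epsilon'$ and the Levi decomposition $K\cap\bar P=(L\cap K)(\bar U\cap K)$—together with \cite[Proposition 7.14]{abv} and \cite[Proposition 20.1(e)]{abv}, gives $\varphi_2':\Xi(K\cap\bar P,[K\cap\bar P]\cdot\epsilon'(\frg(-1)))\to\Xi(L,\frg(-1))$ matching all the relevant invariants. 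Since $\varphi'$ factors as $\varphi_2'\circ\varphi_1'$, the conclusions \eqref{e:mainp2} and the microlocal identity follow.

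I do not expect any serious obstacle: the two-step hypothesis removes the one delicate point (the need for $\mathscr P$ and the verification of \eqref{e:key}), so the only thing to check carefully is that $\epsilon'$ still lands in $\bar U\cdot\frp$ and that the grading argument of Lemma \ref{l:inj} still identifies the $\frg(-1)$-component of $\log\epsilon'(x)$ with $x$—both immediate from $\frg(i)=0$ for $i\le-3$. If anything, the mild subtlety is purely bookkeeping: one should remark explicitly that when $\frg(-2)=0$ (the abelian case) $\epsilon'$ is literally $\exp$ restricted to an abelian nilradical and everything degenerates further, while when $\frg(-2)\ne 0$ one uses that $\exp(\frg(-1))\exp(\frg(-2))=\exp(\frg(-1)\oplus\frg(-2))$ up to reordering, which is all that is needed. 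Thus the proof is obtained by systematically substituting $\epsilon'$ for $\epsilon$ throughout Section \ref{s:proof}, and I would present it as such rather than reproducing every line.
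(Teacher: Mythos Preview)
Your overall strategy---reprove the analogs of Lemma~\ref{l:freeaction}, Proposition~\ref{p:gfinite}, and Corollary~\ref{c:opendense} for $\epsilon'$, then run Sections~\ref{s:opendense}--\ref{s:ib} verbatim---is exactly what the paper does.  The analogs of Lemma~\ref{l:inj}, Lemma~\ref{l:freeaction}, the induced-bundle step, and the concluding argument are all fine as you sketch them.

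There is, however, a genuine gap in your dimension count.  You claim the upper bound $\dim(Q'_\caO)\le\dim(K/K\cap P)+\dim(\caO)$ follows from ``$Q'_\caO=K\cdot\epsilon'(\caO)$, $\epsilon'$ is $L$-equivariant, and $\dim(\epsilon'(\caO))=\dim(\caO)$.''  It does not: those facts only give $\dim(Q'_\caO)\le\dim(K/L)+\dim(\caO)$ via the bundle $K\times_L\frg(-1)$, and $\dim(K/L)=2\dim\frg(-2)$ is strictly larger than $\dim(K/K\cap P)=\dim\frg(-2)$ in the genuine two-step case.  In the proof of Proposition~\ref{p:gfinite} the sharp upper bound came from factoring through the bundle $\caX=K\times_{K\cap P}P_1\times_P\cdots\times_P P_\ell$ built out of $\mathscr{P}$.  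That construction is precisely what is unavailable here---the whole point of Section~\ref{s:twostep} is to treat cases (such as Example~\ref{ex:g2}) where no $\mathscr{P}$ satisfying \eqref{e:key} exists---so you cannot simply transport that step.

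The paper closes this gap with a different argument for Proposition~\ref{p:gfinite2}: rather than bounding $\dim(Q'_\caO)$ from above and below separately, it computes it exactly via the conormal space.  Using the two-step hypothesis one has $\frs=\frg(-1)\oplus\frg(1)$, and a short calculation identifies the conormal space to $Q'_\caO$ at $\exp(x)\cdot\frp$ with $\Ad(\exp(x))\bar\fru\cap\frs=\ker(\ad(x)|_{\frg(-1)})$; hence $\dim(Q'_\caO)=\dim(\bar\fru)-\dim\ker(\ad(x)|_{\frg(-1)})=\dim\frg(-2)+\dim(\caO)$.  This is the one new ingredient specific to Section~\ref{s:twostep}, and it is what you are missing.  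Once you substitute this computation for your unjustified upper bound, the rest of your proof goes through as written.
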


\smallskip

\noindent This will follow in exactly the same way as Theorem \ref{t:main} once we prove analogs of Lemma \ref{l:freeaction}, Proposition \ref{p:gfinite} and Corollary \ref{c:opendense}.

\begin{lemma} \label{l:freeaction2}
 The group  $\bar{U}\cap K$ acts  freely on 
\[
[\bar{U} \cap K] \cdot \epsilon'(\frg(-1)).
\]
Moreover, for all $x \in \frg(-1)$,
\[
\left( [\bar{U} \cap K] \cdot \epsilon' (x) \right ) \cap \epsilon'(\frg_{-1}) = \epsilon'(x).
\]
\end{lemma}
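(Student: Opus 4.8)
The plan is to prove Lemma~\ref{l:freeaction2} by a formal argument parallel to the proof of Lemma~\ref{l:freeaction}, using only that the stabilizer in $G$ of $\frp$ is $P$, that $\bar U\cap P=\{1\}$, and that $\exp$ restricts to an isomorphism of varieties $\overline{\fru}\to\bar U$. Since $\epsilon'$ is the honest exponential and $\frg(-1)\subseteq\overline{\fru}$, no truncation bookkeeping is needed. First I would establish freeness: if $k\in\bar U\cap K$ fixes a point $\epsilon'(x)=\exp(x)\cdot\frp$ with $x\in\frg(-1)$, then $\exp(x)^{-1}k\exp(x)$ fixes $\frp$, so it lies in $P$; but it is also a product of elements of $\bar U$, hence lies in $\bar U\cap P=\{1\}$, forcing $k=1$. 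Thus every point of $\epsilon'(\frg(-1))$ has trivial stabilizer in $\bar U\cap K$; since $[\bar U\cap K]\cdot\epsilon'(\frg(-1))$ is $\bar U\cap K$-stable and stabilizers along an orbit are conjugate, the $\bar U\cap K$-action on it is free.

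Next I would prove the second assertion. Let $k\in\bar U\cap K$ and $x,x'\in\frg(-1)$ satisfy $k\cdot\epsilon'(x)=\epsilon'(x')$, that is, $k\exp(x)\cdot\frp=\exp(x')\cdot\frp$. Then $\exp(x')^{-1}k\exp(x)$ fixes $\frp$ and lies in $\bar U$, so it is trivial, giving $k\exp(x)=\exp(x')$, equivalently $k=\exp(x')\exp(-x)$. Now apply the involution $\theta$, which preserves $\bar U$ and acts as $-1$ on $\frg(-1)$: since $k\in K$ we get $k=\theta(k)=\exp(-x')\exp(x)$. Comparing the two expressions for $k$ yields $\exp(x')\exp(-x)=\exp(-x')\exp(x)$, whence $\exp(2x')=\exp(2x)$, and injectivity of $\exp$ on $\overline{\fru}$ forces $x'=x$. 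Hence $\epsilon'(x')=\epsilon'(x)$ (and in fact $k=1$), which is the desired equality; the same computation with $k=1$ also shows that $\epsilon'$ is injective, the analog of the last sentence of Lemma~\ref{l:freeaction}.

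I do not anticipate a genuine obstacle: the lemma is purely group-theoretic, and---in contrast with the general case---its proof needs neither a choice of $\mathscr P$ nor the truncated map $\epsilon$. The two-step hypothesis will enter only in the analogs of Lemma~\ref{l:inj}, Proposition~\ref{p:gfinite}, and Corollary~\ref{c:opendense}, where one must control the dimension of $K$-saturations of $\epsilon'(\caO)$. The only small point worth checking is that $[\bar U\cap K]\cdot\epsilon'(\frg(-1))$ is $\bar U\cap K$-stable, so that ``trivial stabilizer at every point'' genuinely upgrades to ``free action''; this is immediate from the definition.
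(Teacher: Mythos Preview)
Your proof is correct and follows the same template as the paper, whose entire argument is ``This follows as in the proof of Lemma~\ref{l:freeaction}.'' Your use of the involution $\theta$ for the second assertion is a pleasant variant; the most direct route (and presumably what the paper intends) is to note that in the two-step case $\bar U\cap K=\exp(\frg(-2))$ and $[\frg(-2),\frg(-1)]=0$, so from $k\exp(x)=\exp(x')$ one reads off $\log(k)+x=x'$ and compares graded components to get $x=x'$ and $k=1$ at once.
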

\begin{proof} This follows as in the proof of Lemma \ref{l:freeaction}. \end{proof}
\medskip
\begin{prop}\label{p:gfinite2}
Let $\caO$ be the $L$ orbit of $x \in \frg(-1)$.  Then, 
\begin{equation}
\label{e:gfinite2}
\mathrm{dim}(Q'_\caO) = \mathrm{dim}(Q'_{\{0\}}) +  \mathrm{dim}(\caO).
\end{equation}
\end{prop}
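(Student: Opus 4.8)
The plan is to follow the proof of Proposition \ref{p:gfinite}, observing that its lower bound argument goes through verbatim while its upper bound (which used the bundle $\caX$ of \eqref{e:X}) needs a replacement adapted to the two-step hypothesis. Throughout put $g=\exp(x)$, so $\epsilon'(x)=g\cdot\frp$ and, since $L\subseteq K$ and $\epsilon'$ is $L$-equivariant, $Q'_\caO=K\cdot\epsilon'(x)$. As in Lemma \ref{l:inj}, one sees $Z_L(x)=Z_L(\epsilon'(x))$ --- the argument is in fact simpler here, since $\epsilon'$ involves no truncation, so that $l\in L$ centralizes $\epsilon'(x)=\exp(x)\cdot\frp$ iff $\exp(\Ad(l)x)=\exp(x)$ (using $\bar U\cap P=\{1\}$ and injectivity of $\exp$ on $\overline{\fru}$) iff $l$ centralizes $x$; hence $\dim\epsilon'(\caO)=\dim\caO$. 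Under the two-step hypothesis $\frk=\frg(-2)\oplus\frg(0)\oplus\frg(2)$, so $Q'_{\{0\}}=K\cdot\frp\simeq K/(K\cap P)$ has dimension $\dim\frg(-2)=\dim(\bar U\cap K)$. The bound $\dim(Q'_\caO)\ge\dim(Q'_{\{0\}})+\dim(\caO)$ then follows exactly as in Proposition \ref{p:gfinite}: by Lemma \ref{l:freeaction2}, $\bar U\cap K$ acts freely on $[\bar U\cap K]\cdot\epsilon'(\caO)\subseteq Q'_\caO$, and the ``moreover'' clause of Lemma \ref{l:freeaction2} together with the injectivity of $\epsilon'$ makes $[\bar U\cap K]\times\epsilon'(\caO)\to[\bar U\cap K]\cdot\epsilon'(\caO)$ a bijective morphism, so $\dim(Q'_\caO)\ge\dim(\bar U\cap K)+\dim(\epsilon'(\caO))=\dim(Q'_{\{0\}})+\dim(\caO)$.

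For the reverse inequality, the plan is to prove the analog of Corollary \ref{c:opendense} first and then read off the dimension statement. Since $\bar P=L\bar U$ and $L\subseteq K$, one has $K\cap\bar P=(\bar U\cap K)L$, hence $[K\cap\bar P]\cdot\epsilon'(x)=[\bar U\cap K]\cdot\epsilon'(\caO)$, which by the previous paragraph has dimension $\dim(Q'_{\{0\}})+\dim(\caO)$. So it is enough to show $[K\cap\bar P]\cdot\epsilon'(x)$ is open --- equivalently, as $K\cdot\epsilon'(x)$ is irreducible, dense --- in $Q'_\caO=K\cdot\epsilon'(x)$. Both are smooth subvarieties of $\caP$ through $g\frp$, so it suffices that they have equal tangent spaces there; identifying $T_{g\frp}\caP$ with $\frg/\frp$ by left translation, this becomes the Lie-algebra identity
\[
\frk\;\subseteq\;(\frk\cap\bar\frp)+\Ad(g)\frp .
\]

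This identity is the only genuine obstacle, and it is the precise point at which the hypothesis $\frg(i)=0$ for $|i|>2$ is used: it is the base-case substitute for the bundle argument of Proposition \ref{p:gfinite}. Since $\frk=(\frk\cap\bar\frp)\oplus\frg(2)$, one need only treat $\xi\in\frg(2)$. Because $(\ad x)^5$ kills $\frg(2)$, the maps $\Ad(g^{\pm1})=\exp(\pm\ad x)$ are finite sums on $\frg(2)$, and one checks directly that $\eta:=\tfrac{1}{6}(\ad x)^2\xi-\tfrac{1}{24}(\ad x)^4\xi$ lies in $\frg(0)\oplus\frg(-2)=\frk\cap\bar\frp$ and satisfies $\Ad(g^{-1})(\xi-\eta)\in\frp$, so that $\xi\in\eta+\Ad(g)\frp$. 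With the density statement established, the analog of Corollary \ref{c:opendense} and the arguments of Section \ref{s:opendense} apply, and Theorem \ref{t:main2} follows as indicated in the text.
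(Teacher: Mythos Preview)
Your proof is correct, but it proceeds quite differently from the paper's.

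The paper argues on the \emph{conormal} side: it identifies the conormal space to $Q'_\caO$ at $\frq=\exp(x)\cdot\frp$ inside $T^*_\frq\caP$ with the kernel of $\ad(x)$ on a graded piece, and reads off \eqref{e:gfinite2} from $\dim\caP=\dim Q'_\caO+\dim(\text{conormal})$ together with $\dim Q'_{\{0\}}=\dim\frg(-2)$.  Corollary~\ref{c:opendense2} is then deduced from the dimension formula exactly as Corollary~\ref{c:opendense} was deduced from Proposition~\ref{p:gfinite}.

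You reverse this order: you prove the openness statement of Corollary~\ref{c:opendense2} \emph{first}, via the tangent-space inclusion $\frk\subseteq(\frk\cap\bar\frp)+\Ad(g)\frp$, and then read off the dimension formula from the already-known dimension of $[K\cap\bar P]\cdot\epsilon'(x)$.  Your explicit choice of $\eta=\tfrac{1}{6}(\ad x)^2\xi-\tfrac{1}{24}(\ad x)^4\xi$ checks out (one finds $\Ad(g^{-1})(\xi-\eta)=\xi-(\ad x)\xi+\tfrac{1}{3}(\ad x)^2\xi\in\frg(2)\oplus\frg(1)\oplus\frg(0)\subset\frp$), and the two-step hypothesis is used exactly where it should be, to make $(\ad x)^5$ vanish on $\frg(2)$.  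The paper's route has the advantage of explicitly identifying the conormal fiber, which is what one ultimately cares about for the microlocal statements; your route stays on the tangent side and is closer in spirit to the bundle argument of Proposition~\ref{p:gfinite}, yielding the density statement needed in Section~\ref{s:opendense} as a direct byproduct rather than a corollary.
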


\begin{proof}
Let $\frg = \frk \oplus \frs$ denote the Cartan decomposition.  Because of our two-step 
assumption,  $\frs = \frg(-1) \oplus \frg(1)$, and $\frk = \frg(-2) \oplus \frl \oplus \frg(2)$.
The tangent space to $Q'_{\caO}$ at $\frq := \epsilon'(x) = \exp(x)\cdot\frp$ identifies with 
$\frg/(\frk+\frq)$.  If we fix a nondegenerate invariant form to identify $\frg$ and $\frg^*$,
then then conormal space to $Q'_{\caO}$ at $\frq$ identifies with
\[
[\frg/(\frk+\frq)]^{*,\perp} \simeq \Ad(\exp(x))\bar \fru \cap \frs.
\]
Since $\bar \fru = \frg(-2) \oplus \frg(-1)$, $x \in \frg(-1)$, and $[\frg_i,\frg_j] \subset \frg_{i+j}$, 
\[
\Ad(\exp(x))\bar \fru \cap \frs = \mathrm{ker}(\ad(x)\vert_{\frg(-1)}).
\]
The dimension of $\caP$ is simply $\dim(\bar \fru)$.  On the other hand, $\dim(\caP)$ equals the
the sum of the dimension of $Q'_\caO$ and the dimension of the conormal space to $Q'_\caO$ at $\frq$.  Thus
\[
\dim (\bar{\fru})  = \text{dim}(Q'_\caO)  + \mathrm{ker}(\ad(x)\vert_{\frg(-1)}).
\]
So
\begin{align*}
\text{dim}(Q'_\caO) & = \dim (\bar{\fru}) - \mathrm{ker}(\ad(x)\vert_{\frg(-1)}) \\
& = \dim(\frg(-2)) + \left[ \text{dim} (\frg(-1)) - \mathrm{ker}(\ad(x)\vert_{\frg(-1)})\right]\\
& = \dim(\frg(-2))  + \text{dim}(\caO).
\end{align*}
To complete the proof, note that $Q_{\{0\}} = K\cdot \frp$ has dimension equal to that
of $\frk/(\frk \cap \frp) \simeq \frg(-2)$.
\end{proof}

\medskip

\begin{cor}\label{c:opendense2} $[K\cap \bar{B}] \cdot \epsilon'(x)$ is open and dense in $K \cdot  \epsilon'(x)$.  
\end{cor}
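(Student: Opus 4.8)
The plan is to reproduce the proof of Corollary~\ref{c:opendense} essentially verbatim, with $\epsilon'$, Lemma~\ref{l:freeaction2} and Proposition~\ref{p:gfinite2} in the roles of $\epsilon$, Lemma~\ref{l:freeaction} and Proposition~\ref{p:gfinite}. First I would record the sandwich
\[
[\bar U\cap K]\cdot\epsilon'(\caO)\ \subseteq\ [K\cap\bar P]\cdot\epsilon'(x)\ \subseteq\ K\cdot\epsilon'(x)=Q'_\caO,
\]
where $\caO=L\cdot x$; the left inclusion uses $L\subseteq K\cap\bar P$ (true because $L\subseteq K$ by construction and $L\subseteq\bar P=L\bar U$) together with the $L$-equivariance of $\epsilon'$, which gives $\epsilon'(\caO)=L\cdot\epsilon'(x)$. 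Since $[K\cap\bar P]\cdot\epsilon'(x)$ is an irreducible, locally closed orbit contained in the orbit $Q'_\caO$, it is automatically open and dense in $Q'_\caO$ the moment its dimension equals $\dim Q'_\caO$; so everything reduces to a dimension count, and it is enough to show $\dim\big([\bar U\cap K]\cdot\epsilon'(\caO)\big)\geq\dim Q'_\caO$.

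For the dimension count I would proceed in two steps. First, $\dim\epsilon'(\caO)=\dim\caO$: since $\epsilon'$ is injective (as established in the proof of Lemma~\ref{l:freeaction2}, exactly as for Lemma~\ref{l:freeaction}) and $L$-equivariant, an element $l\in L$ stabilizes $\epsilon'(x)$ iff $\Ad(l)x=x$, so $Z_L(\epsilon'(x))=Z_L(x)$ and the orbit dimensions agree --- this is the (now trivial) analogue of Lemma~\ref{l:inj}. Second, Lemma~\ref{l:freeaction2} shows that whenever $\bar n_1\epsilon'(x_1)=\bar n_2\epsilon'(x_2)$ with $\bar n_i\in\bar U\cap K$ and $x_i\in\frg(-1)$, the element $\bar n_2^{-1}\bar n_1\epsilon'(x_1)=\epsilon'(x_2)$ lies in $\epsilon'(\frg(-1))$, forcing $\epsilon'(x_1)=\epsilon'(x_2)$, then $x_1=x_2$, then $\bar n_1=\bar n_2$; hence $[\bar U\cap K]\times\epsilon'(\caO)\to[\bar U\cap K]\cdot\epsilon'(\caO)$ is injective and $\dim\big([\bar U\cap K]\cdot\epsilon'(\caO)\big)=\dim(\bar U\cap K)+\dim\caO$. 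Finally, in the two-step setting $\bar\fru\cap\frk=\frg(-2)$, so $\dim(\bar U\cap K)=\dim\frg(-2)=\dim Q'_{\{0\}}$ by the closing line of the proof of Proposition~\ref{p:gfinite2}; combining this with Proposition~\ref{p:gfinite2} itself gives $\dim\big([\bar U\cap K]\cdot\epsilon'(\caO)\big)=\dim Q'_{\{0\}}+\dim\caO=\dim Q'_\caO$, as required.

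I do not expect a genuine obstacle here: the whole point of this section is that Lemma~\ref{l:freeaction2} and Proposition~\ref{p:gfinite2} are available, and the corollary is then a formal consequence in exactly the way Corollary~\ref{c:opendense} followed from Lemma~\ref{l:freeaction} and Proposition~\ref{p:gfinite}. The only places that want a little care are (i) using \emph{both} assertions of Lemma~\ref{l:freeaction2}, not merely freeness, so that one obtains the dimension identity rather than just an inequality, and (ii) invoking the standard fact that an irreducible, group-stable, locally closed subset of an orbit having the full dimension of that orbit is open and dense in it. A harmless sanity check worth keeping in mind is that $\frk=\frg(-2)\oplus\frg(0)\oplus\frg(2)$ and $\bar\fru=\frg(-1)\oplus\frg(-2)$ in the two-step case, which is precisely what makes $\bar\fru\cap\frk=\frg(-2)$.
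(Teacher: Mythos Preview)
Your proposal is correct and follows essentially the same approach as the paper, which simply says ``This now follows in the same way as Corollary~\ref{c:opendense}''; you have merely spelled out the details of that argument. Note that the statement's $\bar B$ is a typo for $\bar P$, which you have silently (and correctly) read as intended.
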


\begin{proof}
This now follows in the same way as Corollary \ref{c:opendense}.
\end{proof}

\medskip

\noindent With Corollary \ref{c:opendense2} in hand, we can argue as in the proof of Theorem \ref{t:main}
to establish Theorem \ref{t:main2}.  We omit the details.

\section{remarks on  abv micro-packets}
\label{s:abv}
To conclude, we sketch some consequences related to micro-packets of representations.  Part of our motivation is to explain the appearance of the Kashiwara-Saito 
singularity discovered by Cunningham-Fiora-Kitt \cite{cunningham}.

Fix $\lambda$ integral and semisimple as above.  According to the \cite{vogan:ic4} (reinterpreted in \cite[Theorem 1.24]{abv}), $\Xi_0(K,Y)$ parametrizes a subset of irreducible 
representations with infinitesimal character $\lambda$ of various real forms $G_\bbR^{\prime}$ of the Langlands dual group $G^\vee$.  (In order to think of $\lambda$ as an infinitesimal character for $G_\bbR^{\prime}$, we  view $\lambda \in \frh$ as an element of $(\frh^\vee)^*$ for the dual Cartan subalgebra $\frh^\vee \simeq \frh^*$.) Write $\pi_\bbR(\gamma)$ for the irreducible
representation corresponding to $\gamma \in \Xi(K,Y)$, and let $\Pi_Y$ denote all of the representations of the form $\pi(\gamma)$.

Fix a $K$ orbit $Q$ on $Y$.  Following \cite[Definition 19.15]{abv} define
a subset $\Pi_\R^g(Q)$ of $\Pi_Y$ consisting of those $\pi_\bbR(\gamma)$ such that
the conormal bundle to $Q$ occurs in  the characteristic cycle of $\per(\gamma)$.  The subset  $\Pi_\R^g(Q)$ is called a micro-packet.   Arthur packets are defined in \cite[Definition 22.6]{abv}  as certain special kinds of micro-packets arising from Arthur parameters.

Meanwhile, if we assume further that $\lambda$ is hyperbolic, then according to Lusztig's classification of unipotent representation of graded affine Hecke algebras (\cite{lu:cls1, lu:graded, lu:cls2}), the set $\Xi(L,\frg(-1))$ parameterizes certain unipotent representations of the split $F=\bbQ_p$ form $G'_F$ of
the Langlands dual group $G^\vee$.  (The subset $\Xi_0(L,\frg(-1))$ consisting of irreducible local systems of Springer type parametrizes irreducible unramified representations of $G'_F$.)  Write $\pi_F(\gamma)$ for the irreducible unipotent representation
of $G'_F$ corresponding to $\gamma \in \Xi(L,\frg(-1))$.  Fix an $L$ orbit $\caO$ on $\frg(-1)$.  Just as above, we can
define a micro-packet $\Pi^g_F(\caO)$ consisting of those $\pi_F(\gamma)$ such that
the conormal bundle to $\caO$ occurs in  the characteristic cycle of $\per(\gamma)$.  Vogan has proposed a definition of Arthur packets as a certain special kind of micro-packets in the $p$-adic case.  See \cite{v:llc} and \cite{cfmmx}.

Because $\varphi$ preserves microlocal multiplicities according to Theorem \ref{t:main}, we  conclude that $\varphi$ 
of \eqref{e:phi} maps micro-packets for real groups into micro-packets for $p$-adic groups.  More precisely, fix $\Pi_\R^g(Q_\caO) \subset \Pi_Y$.  Then
\begin{equation}
\label{e:packet}
\left \{ \pi_F(\varphi(\gamma)) \; | \; \pi_\bbR(\gamma) \in \Pi_\R^g(Q_\caO) \right \}
\end{equation}
is contained in micro-packet for $G'_F$ parametrized by $\caO$; if $\varphi$ is surjective, then this will be the entire packet.

For example, consider $G = \GL(n,\bbC)$.  In \cite{bst}, we found occurrences of the
Kashiwara-Saito singularity in $K$ orbits on $\caP$, and deduced the existence of 
reducible characteristic cycles.  In the notation above, this gives examples of micro-packets of the form $\Pi^g_\bbR(Q)$ with more than one element for $\GL(n,\bbR)$.  From
the proof of Theorem \ref{t:main}, one immediately deduces the occurrence of the Kashiwara-Saito singularity in the closure of an $L$ orbit on $\frg(-1)$, and the existence of micro-packets for $\GL(n,F)$ with more than one element, as discovered by \cite{cunningham}.  (It seems plausible that all of the interesting examples in \cite{cunningham} are 
 accounted for by matching with the real case as the choice of ordering on $\mathscr P$ varies.)  In any event, since the theory in the real case is better
developed, it is interesting to study other classical $p$-adic cases from this viewpoint.  We would like to return to this elsewhere.

Finally, one especially interesting class of micro-packets for real groups are the special unipotent
Arthur packets of \cite[Chapter 27]{abv}.  The construction of \eqref{e:packet} applied to special unipotent packets for  real classical groups should give rise to what one might call special unipotent packets for split $p$-adic classical groups.  
It would be interesting to compare this notion with the definition recently
given by Ciubotaru, Mason-Brown, and Okada in \cite{cmbo}.

\end{document}